\DeclareMathOperator\Aut{Aut}
\DeclareMathOperator\ch{char}
\DeclareMathOperator\End{End}
\DeclareMathOperator\Ext{Ext}
\DeclareMathOperator\GKdim{GKdim}
\DeclareMathOperator\gr{gr}
\DeclareMathOperator\id{id}
\DeclareMathOperator\im{Im}
\DeclareMathOperator\Ker{Ker}
\DeclareMathOperator\Kdim{Kdim}
\DeclareMathOperator\SL{SL}
\DeclareMathOperator\Sp{Sp}
\newcommand\fg{\mathfrak g}
\newcommand\fh{\mathfrak h}
\newcommand\pl{\mathfrak{pl}}
\newcommand\inv{^{-1}}
\newcommand\iso{\cong}
\newcommand{\x}{\text}
\newcommand\kk{\mathds{k}}
\newcommand\p{\mathsf{p}}
\newcommand\tensor{\otimes}
\newcommand\bc{\mathbf c}
\newcommand\bq{\mathbf q}
\newcommand\bH{\mathbf H}
\newcommand\cF{\mathcal F}
\newcommand\cS{\mathcal S}
\newcommand\NN{\mathbb N}
\newcommand\ZZ{\mathbb Z}
\newcommand\jp{\mathcal{J}}
\newcommand\qp{\mathcal{O}_q(\kk^2)}
\newcommand\fwa{A_1(\kk)}
\newcommand\wa{A_1^q(\kk)}
\newcommand\qwa{\mathcal{A}_n^{\bq,\Gamma}}
\newtheorem{lemma}[equation]{Lemma}
\newtheorem{theorem}[equation]{Theorem}
\newtheorem{corollary}[equation]{Corollary}
\newtheorem{definition}[equation]{Definition}
\newtheorem{remark}[equation]{Remark}
\newtheorem{example}[equation]{Example}
\newtheorem*{main}{Main Theorem}
\newcommand{\grp}[1]{\langle{#1}\rangle}
\newcommand\jason{\textcolor{blue}}
\setlist[enumerate,1]{label=(\arabic*), ref=\arabic*}
\title{Congenial Algebras: Extensions and Examples}
\author[Gaddis]{Jason Gaddis}
\address{Miami University, Department of Mathematics, 301 S. Patterson Ave., Oxford, Ohio 45056} 
\email{gaddisj@miamioh.edu}
\author[Yee]{Daniel Yee}
\address{Bradley University, Department of Mathematics, 1501 W. Bradley Ave., Peoria, IL 61625}
\email{dyee@bradley.edu}
\subjclass[2010]{16E65, 16W70, 16W22, 17B35}
\keywords{Congenial algebras, pertinency, Auslander Theorem, Lie superalgebras}
\begin{document}

\begin{abstract}
We study the congeniality property of algebras, as defined by Bao, He, and Zhang,
in order to establish a version of Auslander's theorem for various families of filtered algebras.
It is shown that the property is preserved under homomorphic images and tensor products
under some mild conditions.
Examples of congenial algebras in this paper include 
enveloping algebras of Lie superalgebras, iterated differential operator rings, 
quantized Weyl algebras, down-up algebras, and symplectic reflection algebras.
\end{abstract}

\maketitle

%\section{Introduction}

An important result of Auslander \cite{A} shows that 
if $V$ is a finite-dimensional vector space over an algebraically closed field $\kk$ of characteristic zero, and $G$ is a finite group of
automorphisms acting linearly on $\kk[V]$ with no nontrivial reflections (i.e., $G$ is a {\sf small} group), then there is an isomorphism of graded algebras $\kk[V]\# G \rightarrow \End_{\kk[V]^G} \kk[V]$.
There has been much work done in extending this result to
the noncommutative setting, either by replacing $\kk[V]$ by
a suitable noncommutative algebra, replacing $G$ with a Hopf algebra $H$, or both.

Recent work of Bao, He, and Zhang introduces the pertinency invariant
as a way to test whether an algebra $A$ and a Hopf algebra $H$ acting on $A$
satisfy the conclusion of Auslander's Theorem \cite{BHZ2,BHZ1}.
The general theme of their results is that, for a suitable pair,
this holds if and only if the pertinency is at least two.
In \cite{BHZ2} it is shown that a class of filtered algebras known as {\it congenial algebras}, 
along with certain groups of filtered automorphisms, are sufficiently suitable.
Furthermore, they prove that the enveloping algebra of a finite-dimensional Lie algebra is congenial. 
%\cite[Lemma 4.11]{BHZ2}.
The authors state that there are `ample examples of congenial algebras' and
part of our goal is to better understand what algebras satisfy this condition.
We prove the following theorem via various results in this paper.

\begin{main}
The following algebras are congenial with respect to some filtration.
\begin{enumerate}[label=\arabic*.,leftmargin=*]
\item Certain images of congenial algebras under filtration-preserving homomorphisms (Theorem \ref{thm.map}).
%The image of a congenial algebra under a filtration-preserving homomorphism (Theorem \ref{thm.map}).
\item Certain tensor products of congenial algebras (Theorem \ref{thm.tensor}).
%The tensor product $A_1 \tensor_\kk A_2$ of two congenial algebras assuming $\gr(A_1)_{D_1}$ is universally noetherian. Here $D_1$ is the subring of $\kk$ associated to $A_1$ (Theorem \ref{thm.tensor}).
\item The enveloping algebra of a finite-dimensional Lie superalgebra (Theorem \ref{thm.super}).
\item An iterated Ore extension of derivation type over $\kk$
(Theorem \ref{thm.iterated}).
\item A quantized Weyl algebra (Theorem \ref{thm.qweyl}).
\item A noetherian down-up algebra $A(\alpha,\beta,\gamma)$
%with $\beta \neq 0$
assuming the roots of $x^2-\alpha x -\beta=0$ are roots of unity
of order at least two
(Theorem \ref{thm.du}).
\item A symplectic reflection algebra (Theorem \ref{thm.symplectic}).
\end{enumerate}
\end{main}

A common theme to our examples is that, while these algebras are most commonly
defined over a field, the definition works over a suitable commutative ring.
Hence, we are able in these cases to define an {\it order} of the algebra \cite{BHZ2},
and use reduction mod $p$ techniques.

In addition, we establish when possible families of groups acting on
classes of algebras above for which the conclusion of Auslander's Theorem holds.
Notably, we prove that $A\#G \iso \End_{A^G} A$ for any filtered 
Artin-Schelter regular algebra of global dimension 2 and $G$ a finite group of filtered automorphisms acting with trivial homological determinant (Theorem \ref{thm.filtAS}).

\subsection*{Background}
Given an algebra $A$ and a group $G$ acting as automorphisms on $A$,
the {\sf skew group algebra} $A\# G$ is defined to be the $\kk$-vector space $A \tensor \kk G$ with multiplication,
\[ (a \# g)(b \# h) = a g(b) \# gh \quad\text{for all $a,b \in A$, $g,h \in G$.}\]
For a filtered algebra $A$ and a finite group $G$ acting as filtered automorphisms on $A$, 
the {\sf Auslander map} is given by
\begin{align*}
\gamma_{A,G} : A \# G &\to \End_{A^G}(A) \\
       a \# g &\mapsto
       \left(\begin{matrix}A &\to & A \\ b & \mapsto & ag(b)\end{matrix}\right).
\end{align*}
Suppose $A=\kk[x_1,\hdots,x_p]$ and $G$ is a finite group that acts linearly on $A$ without reflections,
then a theorem of Auslander asserts that $\gamma_{A,G}$ is an isomorphism \cite{A}.

Let $A$ be an affine algebra and $G$ a finite group acting on $A$.
The {\sf pertinency} of the $G$-action on $A$ is defined to be
\[ \p(A,G) = \GKdim A - \GKdim (A\# G)/(f_G)\]
where $(f_G)$ is the two sided ideal of $A\#G$ generated by
$f_G = \sum_{g \in G} 1\# g$ and $\GKdim$ is the Gelfand-Kirillov (GK) dimension.
It is possible to define pertinency in terms of any dimension function on right $A$-modules, such as Krull dimension ($\Kdim$), but GK dimension is sufficient for our purpose.
It is also possible to define all of the above in terms of actions by semisimple Hopf algebras, however our focus will be on group actions.

An algebra $A$ is said to be {\sf Cohen-Macaulay} (CM) if $\GKdim(A)=d \in \NN$ and
$j(M)+\GKdim(M)=\GKdim(A)$ for every finitely generated right $A$-module $M\neq 0$.
Here $j(M)=\min\{i : \Ext_A^i(M,A)\neq 0\}$, or $\infty$ if no such $i$ exists, denotes the {\sf grade} of the module $M$.
If one replaces $\GKdim$ with $\Kdim$, then we say $A$ is {\sf Kdim-CM}.

Under suitable conditions, the Auslander map is an isomorphism (for a particular
pair $(A,G)$) if and only if $\p(A,G) \geq 2$.
In particular, by various results in \cite{BHZ2,BHZ1}, this applies when
\begin{enumerate}
\item $A$ is noetherian, connected graded AS regular, and CM of GK dimension at least two, and $G$ acts linearly;
\item $A$ is a noetherian PI Kdim-CM algebra with $\Kdim(A) \geq 2$;
\item $A$ is {\it congenial} and $G$ preserves the filtration on $A$.
\end{enumerate}

Recall that $\kk$ is an algebraically closed field of characteristic zero.
Let $\kk_0=\ZZ$ or $F_p := \ZZ/(p)$, $p \in \kk$ prime.
Also recall that an $R$-algebra $A$ is {\sf strongly noetherian} if $A \tensor_R C$ is noetherian 
for any commutative noetherian $R$-algebra $C$ and {\sf universally noetherian}
if we can drop the commutative hypothesis on $C$.

%Let D denote any k0-affine subalgebra of k. Such a D is an admissible domain in the sense of [ArSZ, p.580]. Note that if D is a k0-affine subalgebra of k, so is D[s?1] for any nonzero element s ? D.

\begin{definition}
Let A be an algebra over $\kk$ and let D be a $\kk_0$-affine subalgebra of $\kk$.
A $D$-subalgebra $A_D$ of $A$ is called an {\sf order} of $A$ if $A_D$ is free over $D$ and $A_D \tensor_D \kk = A$.
\end{definition}

\begin{definition}
\label{defn.congenial}
A $\kk$-algebra $A$ is called {\sf congenial} if the following hold.
\begin{enumerate}
\item $A$ is a locally finite filtered algebra with an $\NN$-filtration $\cF$.
\item $A$ has an order $A_D$ where $D$ is a $\kk_0$-affine subalgebra of $\kk$ such that $A_D$ is a locally finite filtered algebra over $D$ with the induced filtration, still denoted by $\cF$.
\item The associated graded ring $\gr_\cF A_D$ is an order of $\gr_\cF A$.
\item $gr_\cF A_D$ is a strongly noetherian locally finite graded algebra over $D$. 
%i.e. $gr_\cF A_D\otimes_DC$ is noetherian, for any commutative noetherian $D$-algebra $C$.
\item If $F$ is a factor ring of $D$ and is a finite field, then $A_D \tensor_D F$ 
is an affine noetherian PI algebra over $F$.
\end{enumerate}
\end{definition}

The original definition of congeniality requires $A_D$ to be noetherian but this is implied by (4).
As the referee observed, the original noetherian assumption in (1) is also unnecessary by (3) and (4).
As stated in \cite{BHZ2}, one can always replace $D$ in Definition \ref{defn.congenial}
with a $D$-affine subalgebra $D' \subset \kk$.
We use this fact freely throughout without comment.

It is important to note that the definition of congenial is relative to some filtration
on the algebra $A$ and the results of \cite{BHZ2} only apply to groups acting on $A$
that preserve the filtration.
Hence, when we state that an algebra is congenial, we either define explicitly
a filtration or assume that it is clear from the context.
For example, the filtration that we use on Lie superalgebras is the standard PBW filtration.

\begin{comment}
In Section \ref{sec.super}, we study the congeniality property of enveloping algebras of Lie superalgebras.
Using a rather straightforward generalization of \cite[Lemma 4.11]{BHZ2}, we prove that
the enveloping algebra of a finite-dimensional Lie superalgebra is congenial (Lemma \ref{lem.super2}).
Section \ref{sec.ext} explores the the congeniality property under certain extensions,
including homomorphic images (Theorem \ref{thm.map}),
tensor products (Theorem \ref{thm.tensor}),
and iterated differential operator rings (Theorem \ref{thm.iterated}).
Using these results, we prove that the Auslander map is an isomorphism for the pair $(A,H)$
where $A$ is a filtered AS regular algebra of global dimension 2 and $G$ is a finite group
acting on $A$ with trivial homological determinant and preserving the filtration
(Theorem \ref{thm.filtAS}).
\end{comment}

%\section{Congenial Extensions}
%\label{sec.ext}

\subsection*{Congenial extensions}

We consider how the congeniality property behaves under both homomorphisms and tensor products.

\begin{theorem}\label{thm.map}
Let $\psi:A\rightarrow B$ be a surjective $\kk$-algebra homomorphism of filtered algebras that preserves the filtration on the congenial algebra $A$ with order $A_D$.
If $B_D=\psi(A_D)$ and $\gr(B)_D$ are free as $D$-modules,
then $B$ is congenial
\end{theorem}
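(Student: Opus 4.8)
The plan is to equip $B$ with the filtration $\cF_n B := \psi(\cF_n A)$ and to verify the five conditions of Definition~\ref{defn.congenial} for $B$, using the candidate order $B_D = \psi(A_D)$ filtered by $\cF_n B_D := B_D\cap\cF_n B$. Since $\psi$ is surjective and filtration-preserving it is \emph{strict}, i.e.\ $\psi(\cF_n A)=\cF_n B$ holds by construction; consequently $\psi$ induces surjections of graded algebras $\gr\psi\colon\gr A\to\gr B$ and $\gr\psi_D\colon\gr A_D\to\gr B_D$, with $\Ker(\gr\psi)=\gr(\Ker\psi)$. These surjections drive the whole argument. Throughout I use that $A$, being congenial, is noetherian and that $\gr A=\gr A_D\tensor_D\kk$ is noetherian as well (conditions (3) and (4)). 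I organize the verification into the routine conditions (1), (4), (5) and the substantive order conditions (2), (3).

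I would first dispose of conditions (1), (4), and (5), each of which is inherited by a homomorphic image. For (1), $\cF_n B=\psi(\cF_n A)$ is a quotient of the finite-dimensional space $\cF_n A$, so $B$ is locally finite filtered. For (4), $\gr B_D$ is the image of the strongly noetherian algebra $\gr A_D$ under $\gr\psi_D$, and strong noetherianity descends to quotients: for any commutative noetherian $D$-algebra $C$ the ring $\gr B_D\tensor_D C$ is a quotient of the noetherian ring $\gr A_D\tensor_D C$; moreover $\gr B_D$ is locally finite graded as a quotient of the locally finite $\gr A_D$. For (5), when $F$ is a finite-field factor ring of $D$ the algebra $B_D\tensor_D F$ is the image of $A_D\tensor_D F$ under $\psi\tensor_D F$, and affineness, the noetherian property, and satisfaction of a polynomial identity all pass to quotients.

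The substance lies in conditions (2) and (3), that $B_D$ is an order of $B$ and $\gr B_D=\gr(B)_D$ is an order of $\gr B$. Freeness over $D$ is supplied by hypothesis in each case, so only the natural maps $B_D\tensor_D\kk\to B$ and $\gr B_D\tensor_D\kk\to\gr B$ remain to be shown isomorphisms. Both are surjective for free, since $A=A_D\tensor_D\kk$ forces every element of $B=\psi(A)$ to be a $\kk$-combination of elements of $\psi(A_D)=B_D$, and likewise after passing to $\gr$ (here $\gr B=\kk\cdot\gr B_D$ using $\gr B_D=\gr\psi(\gr A_D)$). Writing $\mathfrak{K}=\Ker\psi$, the map $B_D\tensor_D\kk\to B$ is an isomorphism precisely when $\mathfrak{K}=\kk\,(\mathfrak{K}\cap A_D)$, that is, when the kernel is defined over $D$; the analogous graded statement is $\Ker(\gr\psi)=\kk\,(\Ker(\gr\psi)\cap\gr A_D)$. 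I would treat the graded statement (3) as the key case and then obtain (2) from the standard fact that a filtered map whose associated graded is injective is itself injective (the filtration on $B_D\tensor_D\kk$ having $\gr$ equal to $\gr B_D\tensor_D\kk$ because $B_D$ is free).

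This descent of the kernels is the main obstacle, and it is where I would invoke the standing freedom to enlarge $D$. Because $A$ and $\gr A$ are noetherian, $\mathfrak{K}$ and $\Ker(\gr\psi)$ are finitely generated; expressing a finite generating set in terms of $A=A_D\tensor_D\kk$ (resp.\ $\gr A=\gr A_D\tensor_D\kk$) involves only finitely many scalars of $\kk$, so we may assume at the outset that $D$ is chosen large enough that these generators lie in $A_D$ (resp.\ $\gr A_D$). A short computation using that $A=\kk A_D$ is an algebra then gives $\mathfrak{K}=\kk\,(\mathfrak{K}\cap A_D)$ and $\Ker(\gr\psi)=\kk\,(\Ker(\gr\psi)\cap\gr A_D)$, yielding the two isomorphisms and, together with the freeness hypotheses, the order conditions (2) and (3). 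The one point demanding genuine care is the bookkeeping around strictness: one must check that the induced filtration $\cF_n B_D=B_D\cap\cF_n B$ agrees with $\psi(\cF_n A_D)$, so that $\gr\psi_D$ is strict and $\gr B_D=\gr(\psi(A_D))$ really is the image of $\gr A_D$ under $\gr\psi$. Once this compatibility is in place, the freeness hypotheses carry the argument to its conclusion.
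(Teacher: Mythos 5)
Your proposal is correct and follows the same five-step skeleton as the paper's proof: conditions (1), (4), (5) are inherited by quotients via the induced maps $\psi_D$ and $\gr\psi$, exactly as in the paper. Where you genuinely diverge is in the treatment of the order conditions (2) and (3). The paper disposes of these with the chain of equalities $B_D\tensor_D\kk=\psi_D(A_D)\tensor_D\kk=\psi(A)=B$, which only addresses surjectivity of the natural map $B_D\tensor_D\kk\to B$; you correctly isolate injectivity as the substantive point, reformulate it as the descent statement $\Ker\psi=\kk\,(\Ker\psi\cap A_D)$, and resolve it by enlarging $D$ (permitted by the paper's standing convention) so that a finite generating set of $\Ker\psi$ and of $\Ker(\gr\psi)$ lies over $D$, whence $\Ker\psi=\sum A f_iA=\kk\sum A_Df_iA_D\subseteq\kk(\Ker\psi\cap A_D)$. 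This extra step is not pedantry: freeness of $B_D$ over $D$ alone does not force $B_D\tensor_D\kk\to B$ to be injective (for instance $\ZZ[\sqrt{2}]$ is free of rank $2$ over $\ZZ$ yet $\ZZ[\sqrt{2}]\tensor_\ZZ\kk\not\iso\kk$, and such a $B_D$ arises from $\psi:\kk[x]\to\kk$, $x\mapsto\sqrt 2$), so your kernel-descent argument is doing real work that the paper's proof elides. Your remaining points --- strictness of the surjection giving $\Ker(\gr\psi)=\gr(\Ker\psi)$, flatness of $\kk$ over $D$ giving $\gr(B_D\tensor_D\kk)=\gr(B_D)\tensor_D\kk$, and deducing the filtered injectivity from the graded one --- are all sound; just be sure to pin down, as you flag, that the induced filtration $B_D\cap\cF_nB$ is the one under which $\gr(B)_D$ is taken, so that the freeness hypothesis applies to the object you actually produce.
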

\begin{proof} 

(1) It is clear that $B$ is a locally finite filtered algebra.

(2) Since $B \iso A/\ker\psi$, then we may regard $B$ (and $B_D$) as a $D$-module. 
Note $B_D$ is free by hypothesis.
Naturally there is a surjective $D$-algebra homomorphism $\psi_D:A_D\rightarrow B_D$ induced by $\psi$. Thus $B_D\otimes_D\kk=\psi_D(A_D)\otimes_D\kk=\psi(A)=B$, hence $B_D$ is an order of $B$.

(3) The map $\psi$ naturally induces a surjective $\kk$-algebra homomorphism $\gr\psi:\gr A\rightarrow \gr B$. A similar argument to (2) shows that $\gr B_D$ is an order of $\gr B$.

(4) For any commutative noetherian $D$-algebra $C$, $\gr A_D\otimes_DC$ is noetherian. Applying the map $\gr \psi\otimes\id_C$ implies that $\gr B_D\otimes_DC$ is also noetherian.

(5) Let $F$ be a factor ring of $D$ and finite field. Applying the $F$-algebra homomorphism $\psi_D\otimes\id_F:A_D\otimes_DF\rightarrow B_D\otimes_DF$ shows that $B_D\otimes_DF$ affine noetherian PI over $F$, since $A_D\otimes_DF$ is affine noetherian PI.
\end{proof}

The hypothesis that $B_D$ and $\gr(B)_D$ be free is satisfied automatically in certain cases, such as when $D$ is a PID.
For example, the first Weyl algebra $\fwa$ is the homomorphic image of the enveloping algebra $U(\fh)$ where $\fh$ is the Heisenberg Lie algebra.
In this case, we may take $D=\ZZ$. 
Both $(\fwa)_\ZZ=A_1(\ZZ)$ and $\gr(\wa)_\ZZ=\gr(A_1(\ZZ))=\ZZ[x,y]$ (under the standard filtration) are free $\ZZ$-modules. Hence, $\fwa$ is congenial.
We generalize this in Corollary \ref{cor.pbw} but first we consider a short result on tensor products.

\begin{comment}
\begin{theorem}\label{thm.map}
A filtration-preserving homomorphic image of a congenial algebra is congenial.
\end{theorem}
\begin{proof} Let $\psi:A\rightarrow B$ be a surjective $\kk$-algebra homomorphism of 
filtered algebras that preserves the filtration on $A$, and let $A$ be congenial.

(1) It is clear that $B$ is a locally finite filtered algebra.

(2) Since $A_D$ is an order of $A$, then $A$ is free over $D$ which implies that $\ker\psi$ is free over $D$, therefore $A/\ker\psi\iso B$ is a free $D$-module. Naturally there is a surjective $D$-algebra homomorphism $\psi_D:A_D\rightarrow B_D$ induced by $\psi$. Thus $B_D\otimes_D\kk=\psi_D(A_D)\otimes_D\kk=\phi(A)=B$, hence $B_D$ is an order of $B$.

(3) The map $\psi$ naturally induces a surjective $\kk$-algebra homomorphism $\gr\psi:\gr A\rightarrow \gr B$. A similar argument to (2) shows that $\gr B_D$ is an order of $\gr B$.

(4) For any commutative noetherian $D$-algebra $C$, $\gr A_D\otimes_DC$ is noetherian. Applying the map $\gr \psi\otimes\id_C$ implies that $\gr B_D\otimes_DC$ is also noetherian.

(5) Let $F$ be a factor ring of $D$ and finite field. Applying the $F$-algebra homomorphism $\psi_D\otimes\id_F:A_D\otimes_DF\rightarrow B_D\otimes_DF$ shows that $B_D\otimes_DF$ affine noetherian PI over $F$, since $A_D\otimes_DF$ is affine noetherian PI.
\end{proof}
\end{comment}

\begin{corollary}
\label{cor.tensor}
Suppose $A_1$ and $A_2$ are $\kk$-algebras such that $A_1\otimes_\kk A_2$ is congenial with corresponding $\kk_0$-affine subalgebra $D$
If $(A_1)_D$ (resp. $(A_2)_D$) and $\gr(A_1)_D$ (resp. $\gr(A_2)_D$) are free as a $D$-module, then $A_1$ (resp. $A_2$) is congenial.
\end{corollary}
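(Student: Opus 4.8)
The plan is to verify the five conditions of Definition \ref{defn.congenial} for $A_1$ directly, the argument for $A_2$ being symmetric. Throughout I take the order of the tensor product to factor as $(A_1\otimes_\kk A_2)_D=(A_1)_D\otimes_D(A_2)_D$, so that passing to associated graded rings gives $\gr(A_1\otimes_\kk A_2)_D=\gr(A_1)_D\otimes_D\gr(A_2)_D$ and likewise $\gr(A_1\otimes_\kk A_2)=\gr A_1\otimes_\kk\gr A_2$ for the tensor-product filtration. It is worth noting at the outset that Theorem \ref{thm.map} does not apply directly: realizing $A_1$ as a filtration-preserving image of $A_1\otimes_\kk A_2$ would require an algebra map $A_2\to\kk$, and such an augmentation need not exist (for instance the Weyl algebra admits none). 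Hence I argue at the level of the axioms.

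For (1), local finiteness of $A_1$ is inherited from $A_1\otimes_\kk A_2$, since each $\cF_n A_1$ embeds into the finite-dimensional space $\cF_n(A_1\otimes_\kk A_2)$ via $a\mapsto a\otimes 1$. For (2) and (3), the freeness of $(A_1)_D$ (resp. of $\gr(A_1)_D$) makes the natural map $\phi_1\colon(A_1)_D\otimes_D\kk\to A_1$ (resp. its graded analogue) a $\kk$-algebra homomorphism, and under the factorizations above the order isomorphism for the tensor product is precisely $\phi_1\otimes_\kk\phi_2$. The key elementary point is that a tensor product $\phi_1\otimes_\kk\phi_2$ of linear maps of nonzero $\kk$-vector spaces is an isomorphism only if each $\phi_i$ is; applying this shows $\phi_1$ is an isomorphism, so $(A_1)_D$ is an order of $A_1$ and $\gr(A_1)_D$ is an order of $\gr A_1$.

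Conditions (4) and (5) are where the work lies. For (4), since $\gr(A_2)_D$ is connected graded over $D$ (its degree-zero component being $D$), its degree-zero projection is an algebra homomorphism $\epsilon\colon\gr(A_2)_D\to D$, so $\id\otimes\epsilon$ exhibits $\gr(A_1)_D$ as an algebra retract --- in particular a homomorphic image --- of $\gr(A_1)_D\otimes_D\gr(A_2)_D$. Tensoring with any commutative noetherian $D$-algebra $C$ and using that a quotient of a noetherian ring is noetherian then transfers strong noetherianity to $\gr(A_1)_D$, while local finiteness over $D$ follows from the freeness hypothesis. For (5) no such augmentation is available, so I instead set $R=(A_1)_D\otimes_D F$ and $S=(A_2)_D\otimes_D F$ and exploit that $S$, being a nonzero $F$-vector space, is faithfully flat over the field $F$. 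Noetherianity of $R$ (and of $\gr R\iso\gr(A_1)_D\otimes_D F$) then follows because $I\mapsto I\otimes_F S$ is a strictly order-preserving map from right ideals of $R$ into right ideals of the noetherian ring $R\otimes_F S$; the PI condition passes to the subalgebra $R\hookrightarrow R\otimes_F S$; and affineness follows since $\gr R$ is connected graded and noetherian, hence finitely generated by graded Nakayama, so that $R$ itself is affine.

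The main obstacle is condition (5): unlike the graded setting of (4), the factor $A_2$ need not admit an augmentation, so the homomorphic-image approach of Theorem \ref{thm.map} is unavailable and one must instead descend noetherianity, the PI property, and affineness through the faithfully flat extension $R\hookrightarrow R\otimes_F S$ over the finite field. A secondary, purely bookkeeping, difficulty is confirming that the order of the tensor product genuinely factors as $(A_1)_D\otimes_D(A_2)_D$ and that the relevant order maps are the corresponding tensor products $\phi_1\otimes\phi_2$, which is exactly what licenses the vector-space lemma used in (2) and (3).
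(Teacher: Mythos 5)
Your route is genuinely different from the paper's: the paper disposes of this corollary in one line by feeding the ``projections'' $a_1\tensor a_2\mapsto a_1\tensor 1$ and $a_1\tensor a_2\mapsto 1\tensor a_2$ into Theorem \ref{thm.map}, whereas you verify the five axioms of Definition \ref{defn.congenial} directly. Your opening objection is well taken: those projections are well-defined algebra homomorphisms only when the complementary factor admits an augmentation $A_i\to\kk$, and congenial algebras such as the Weyl algebra (infinite-dimensional and simple) admit none; so a direct verification of the kind you give is genuinely needed, and your treatment of (1)--(3) and of the noetherian and PI parts of (5) (descent along the faithfully flat extension $R\hookrightarrow R\tensor_F S$) is correct.

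The gap is that you smuggle the same augmentation back in at step (4): you take $\gr(A_2)_D$ to be connected graded with degree-zero part $D$, so that projection onto degree zero is an algebra retraction. Definition \ref{defn.congenial} only requires $\gr_\cF A_D$ to be a locally finite graded algebra, not a connected one, and this paper itself contains non-connected examples --- for a symplectic reflection algebra one has $\gr_\cF \bH_{t,\bc}\iso\kk[V]\#G$, whose degree-zero part is $\kk G$, and no retraction onto $D$ need exist. The same issue affects the affineness argument in (5), where you invoke graded Nakayama for a connected algebra. Both points are repairable with tools already in your proof: for (4), $\gr(A_1)_D\tensor_D\gr(A_2)_D\tensor_D C$ is free, hence faithfully flat, as a left module over $\gr(A_1)_D\tensor_D C$ because $\gr(A_2)_D$ is free over $D$, so the ideal-expansion argument you use in (5) descends the noetherian property without any retraction; for (5), a noetherian $\NN$-graded algebra over $F$ with finite-dimensional degree-zero part is affine (it is generated by $(\gr R)_0$ together with homogeneous generators of the ideal $(\gr R)_{\geq 1}$), so connectedness is not needed there either. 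With those two repairs your argument is complete and, unlike the printed proof, does not presuppose augmentations on the tensor factors.
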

\begin{proof}
This follows from Theorem \ref{thm.map} with the maps $a_1\otimes a_2\mapsto a_1\otimes1$ and $a_1\otimes a_2\mapsto1\otimes a_2$.
\end{proof}

\begin{comment}
\begin{corollary}
\label{cor.tensor}
If $A_1$ and $A_2$ are $\kk$-algebras such that $A_1\otimes_\kk A_2$ is congenial, 
then both $A_1$ and $A_2$ are congenial.
\end{corollary}
\begin{proof}
This follows from Theorem \ref{thm.map} with the maps $a_1\otimes a_2\mapsto a_1\otimes1$ and $a_1\otimes a_2\mapsto1\otimes a_2$.
\end{proof}
\end{comment}

%Recall that an affine algebra $A$ is {\sf almost commutative} if $\gr A$ is commutative under a standard finite-dimensional filtration \cite{MR}.

\begin{corollary}
\label{cor.pbw}
If $A$ be an affine, connected filtered algebra generated by $A_1$
such that $\gr A$ is a polynomial algebra, then $A$ is congenial.
Moreover, if $G$ is a finite small subgroup of linear automorphisms on $A$, then $\p(A,G) \geq 2$ and so $A\# G \iso \End_{A^G} A$.
\end{corollary}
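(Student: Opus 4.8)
The plan is to derive congeniality from Theorem~\ref{thm.map} by realizing $A$ as a filtration-preserving image of an enveloping algebra, and then to force $\p(A,G)\ge 2$ by degenerating to the commutative Auslander theorem for $\gr A$. First I would use that $\gr A$ is commutative to manufacture a Lie algebra. Put $L=F_1 A$, which is finite-dimensional by local finiteness. For $a,b\in L$ the symbols of $ab$ and $ba$ coincide in $\gr_2 A$, so $[a,b]=ab-ba\in F_1 A=L$; thus $L$ is a finite-dimensional Lie algebra under the commutator (with $1\in A$ central), and $\gr A$ is the polynomial ring on $\gr_1 A=L/F_0A$. The inclusion $L\hookrightarrow A$ is a Lie algebra map and induces an algebra homomorphism $\psi\colon U(L)\to A$; it is surjective because its image contains $A_1$, and it carries the PBW filtration of $U(L)$ into $\cF$. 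Since $U(L)$ is congenial by~\cite{BHZ2}, I would apply Theorem~\ref{thm.map} with $U(L)$ in the role of the congenial source. Taking $D$ to be a $\kk_0$-affine subalgebra of $\kk$ containing the structure constants of $L$ and the coefficients of a homogeneous generating set of $\gr A$, the relevant orders $U(L)_D$, $\mathrm{Sym}_D(L_D)=\gr U(L)_D$, $A_D=\psi(U(L)_D)$ and $\gr(A)_D$ are all free over $D$, being (lifts of) ordered monomial bases in a polynomial ring. The one genuinely delicate axiom, condition~(5), is inherited rather than proved directly: $A_D\otimes_D F$ is a homomorphic image of the PI algebra $U(L)_D\otimes_D F$ and hence is itself PI, which is precisely why routing through $U(L)$ is preferable to checking congeniality of $A$ by hand.

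For the second statement, a linear automorphism fixes $F_0A$ and preserves $F_1A$, so it preserves $\cF$ and induces on $\gr A$ the same linear action on $\gr_1 A$; smallness of $G$ is a condition on this linear action, so $G$ acts as a small linear group on the polynomial ring $\gr A$ (if $\dim \gr_1 A\le 1$ then $G$ is forced to be trivial and there is nothing to prove, so assume $\GKdim A=\dim\gr_1 A\ge 2$). I would then invoke the standard filtered-to-graded inequality $\p(A,G)\ge\p(\gr A,G)$: the symbol of $f_G=\sum_{g\in G}1\#g$ is the same element $f_G$ of $\gr(A\#G)=(\gr A)\#G$, so $\gr\big((A\#G)/(f_G)\big)$ is a quotient of $\big((\gr A)\#G\big)/(f_G)$, whence $\GKdim\big((A\#G)/(f_G)\big)\le\GKdim\big(((\gr A)\#G)/(f_G)\big)$; together with $\GKdim A=\GKdim\gr A$ this yields the inequality. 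By the classical Auslander theorem~\cite{A} for the small linear group $G$ on $\gr A$ (equivalently, by case~(1) of the characterization, since $\gr A$ is AS regular and CM of GK dimension $\ge 2$), the map $\gamma_{\gr A,G}$ is an isomorphism and so $\p(\gr A,G)\ge 2$; hence $\p(A,G)\ge 2$.

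Finally, because $A$ is congenial and $G$ preserves $\cF$, the congenial-algebra characterization of~\cite{BHZ2} gives that $\gamma_{A,G}$ is an isomorphism if and only if $\p(A,G)\ge 2$, and the previous paragraph supplies the inequality, so $A\#G\iso\End_{A^G}A$. I expect the main obstacle to be the order bookkeeping in the first step: arranging a single $D$ over which all four modules are simultaneously free and the graded orders are compatible, and confirming that condition~(5) really does transfer through $\psi$. The degeneration inequality is the other point to pin down carefully, since it depends on computing $\GKdim$ from a good filtration on the finitely generated module $(A\#G)/(f_G)$.
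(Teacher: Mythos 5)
Your proposal is correct and follows essentially the same route as the paper: realize $A$ as a filtration-preserving image of $U(L)$ (your explicit construction $L=F_1A$ is exactly the content of the cited \cite[Proposition 4.3]{MR} on almost commutative algebras), verify the freeness hypotheses so that Theorem \ref{thm.map} applies with $U(L)$ congenial by \cite[Lemma 4.11]{BHZ2}, and then combine the Cohen--Macaulay property of $A$, the filtered-to-graded pertinency inequality, and the classical Auslander theorem for the small linear action on $\gr A$ to get $\p(A,G)\geq 2$ and conclude via \cite[Theorem 4.10]{BHZ2}. The only difference is that you spell out steps the paper compresses into citations.
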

\begin{proof}
Let $A_1=\{a_1,\hdots,a_n\}$.
Then for each $i \neq j$, $[a_i,a_j] = c_{ij0}+\sum_{k=1}^n c_{ijk} a_k$.
Define $D$ as the algebra over $\ZZ$ generated by the nonzero $c_{ijk}$
and let $A_D$ be $A$ but with coefficients restricted to $D$.
It is clear that $A_D$ is free over $D$ with basis the set of standard monomials.

Under the given hypotheses, $A$ is almost commutative \cite{MR},
and hence it is the homomorphic image of an enveloping algebra $U(\fg)$ for some finite-dimensional Lie algebra $\fg$ \cite[Proposition 4.3]{MR}.
The enveloping algebra $U(\fg)$ is congenial by \cite[Lemma 4.11]{BHZ2} and we can choose $D$ as the corresponding $\kk_0$-affine subalgebra of $\kk$.
Thus, $A$ is congenial by Theorem \ref{thm.map}.

As $\gr A$ is a polynomial algebra, $A$ is CM and so we may apply \cite[Theorem 4.10]{BHZ2}.
\end{proof}

\begin{comment}
\begin{corollary}
\label{cor.pbw}
If $A$ is an almost commutative algebra, then $A$ is congenial.
Moreover, if $\gr A$ is a polynomial algebra and 
$G$ is a finite small subgroup of linear automorphisms on $A$,
then $\p(A,G) \geq 2$ and so $A\# G \iso \End_{A^G} A$.
\end{corollary}
\begin{proof}
As $A$ is almost commutative, it is the homomorphic image of an enveloping algebra $U(\fg)$ for some finite-dimensional Lie algebra $\fg$ \cite[Proposition 4.3]{MR}.
The enveloping algebra $U(\fg)$ is congenial by \cite[Lemma 4.11]{BHZ2} and so $A$ is congenial by Theorem \ref{thm.map}.
If $\gr A$ is a polynomial algebra, then $A$ is CM and so we may apply \cite[Theorem 4.10]{BHZ2}.
\end{proof}
\end{comment}

Next we prove a partial converse to Corollary \ref{cor.tensor}.

\begin{theorem}
\label{thm.tensor}
If $A_1,A_2$ are congenial $\kk$-algebras and if
$\gr(A_1)_{D_1}$ is universally noetherian, 
%$A_1,\gr(A_1)_{D_1}$ are universally noetherian, 
then $A_1 \tensor_{\kk} A_2$ is congenial.
\end{theorem}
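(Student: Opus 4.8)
The plan is to verify the five conditions of Definition \ref{defn.congenial} for $A := A_1 \tensor_{\kk} A_2$. First I would fix a common base ring. Since $A_1$ and $A_2$ are congenial with orders $(A_1)_{D_1}$ and $(A_2)_{D_2}$ over $\kk_0$-affine subalgebras $D_1,D_2 \subseteq \kk$, I take $D$ to be the $\kk_0$-subalgebra of $\kk$ generated by $D_1 \cup D_2$ (still $\kk_0$-affine) and replace each $D_i$ by $D$ via the base change $(A_i)_D := (A_i)_{D_i}\tensor_{D_i} D$. This is permissible by the remark following Definition \ref{defn.congenial}; it preserves freeness, and it preserves both the strong and the universal noetherian properties, since for a (commutative, resp. arbitrary) noetherian $D$-algebra $R$ one has $(\gr(A_i)_{D_i}\tensor_{D_i}D)\tensor_D R \cong \gr(A_i)_{D_i}\tensor_{D_i}R$ with $R$ a noetherian $D_i$-algebra. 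I then equip $A$ with the tensor-product filtration $\cF_n A = \sum_{i+j=n}\cF_i A_1\tensor\cF_j A_2$ and set $A_D := (A_1)_D \tensor_D (A_2)_D$.

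Conditions (1) and (2) are then routine: $A$ is locally finite filtered, $A_D$ is free over $D$ as a tensor product of free $D$-modules, and base change gives $A_D\tensor_D\kk \iso A_1\tensor_\kk A_2 = A$. For condition (3) I would invoke the standard identification $\gr_\cF(A_1\tensor A_2)\iso \gr A_1 \tensor \gr A_2$ for the tensor filtration, together with its integral analogue $\gr_\cF A_D \iso \gr(A_1)_D \tensor_D \gr(A_2)_D$; the latter requires that the filtration quotients of each $(A_i)_D$ be free over $D$, which holds because $\gr(A_i)_D$ is an order and hence free. Freeness then also yields that $\gr_\cF A_D$ is an order of $\gr_\cF A$.

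The crux is condition (4), and this is where the universal noetherian hypothesis enters. Using $\gr_\cF A_D \iso \gr(A_1)_D \tensor_D \gr(A_2)_D$, for an arbitrary commutative noetherian $D$-algebra $C$ I would reassociate
\[ \gr_\cF A_D \tensor_D C \iso \gr(A_1)_D \tensor_D \bigl(\gr(A_2)_D \tensor_D C\bigr). \]
Here $R := \gr(A_2)_D \tensor_D C$ is noetherian because $\gr(A_2)_D$ is strongly noetherian, but $R$ is in general noncommutative, so strong noetherianity of $\gr(A_1)_D$ alone would not let me conclude that $\gr(A_1)_D \tensor_D R$ is noetherian. This is precisely why one assumes $\gr(A_1)_{D_1}$ (equivalently $\gr(A_1)_D$) to be universally noetherian: it forces $\gr(A_1)_D \tensor_D R$ noetherian for the noncommutative $D$-algebra $R$. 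Local finiteness and gradedness of the tensor product are immediate, establishing (4); the asymmetry of this argument also explains why the hypothesis need only be imposed on one factor.

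Finally, for condition (5), let $F$ be a finite-field factor of $D$. I would again pass to the associated graded: for the tensor filtration, $\gr(A_D\tensor_D F) \iso \gr_\cF A_D \tensor_D F$, which is noetherian by condition (4) applied to the commutative noetherian $D$-algebra $F$. Since a filtered algebra whose associated graded ring is noetherian is itself noetherian, $A_D\tensor_D F$ is noetherian. It is affine as a tensor product of the affine $F$-algebras $(A_i)_D\tensor_D F$, and it is PI by Regev's theorem on tensor products of PI algebras over a field. I expect the main obstacle to be the two associated-graded identifications over the ring $D$ rather than over a field, where one must track freeness of the filtration quotients carefully; the conceptual heart of the argument, however, is the reduction in (4) to universal noetherianity of the single factor $\gr(A_1)_D$.
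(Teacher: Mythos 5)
Your proposal is correct and follows essentially the same route as the paper: enlarge to a common base ring $D=\langle D_1\cup D_2\rangle$, use the tensor filtration, identify $\gr_\cF A_D$ with $\gr(A_1)_D\tensor_D\gr(A_2)_D$, and in step (4) reassociate so that universal noetherianity of $\gr(A_1)_D$ absorbs the noncommutative factor $\gr(A_2)_D\tensor_D C$ --- exactly the paper's argument. The only divergence is in step (5): the paper deduces noetherianity of $(A_1\tensor_\kk A_2)_D\tensor_D F$ directly from universal noetherianity of the first factor together with noetherianity of $(A_2)_D\tensor_D F$, whereas you lift it from the associated graded ring by applying step (4) with $C=F$; both work (your freeness bookkeeping for the filtration quotients is justified since $\gr_\cF A_D$ is an order, hence free over $D$), and your variant has the mild advantage of not invoking any noetherian input beyond what step (4) already provides. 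The affine and PI parts of step (5), via Regev's theorem over the field $F$, match the paper.
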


\begin{proof}
(1) 
%Since $A_1$ is universally noetherian, then $A$ is noetherian.
Let $\cF_1$ and $\cF_2$ be the respective filtrations of $A_1$ and $A_2$.
Define a filtration $\cF$ on $A=A_1 \tensor_{\kk} A_2$ by declaring that
$\cF_n(A)$ is generated (as a vector space) by those tensors $a_1 \tensor a_2$
such that $a_1 \in \cF_r$, $a_2 \in \cF_s$, and $r+s \leq n$.
It is clear that $\cF$ is locally finite since $\cF_1$ and $\cF_2$ are.

(2) Let $D=\grp{D_1 \cup D_2}$. As $D_1$ and $D_2$ are both $\kk_0$-affine
then so is $D$.
%Denote the $\kk_0$-generators of $D_1$ and $D_2$ by $\{d_{11},\hdots,d_{1n}\}$ and $\{d_{21},\hdots,d_{2m}\}$, respectively.
%Let $D$ be the $\kk_0$-affine subalgebra of $\kk$ generated by the $d_{ij}$.
Since $D$ is finitely generated over both $D_1$ and $D_2$,
then the algebras $A_1$ and $A_2$ over $D$, denoted $(A_1)_D$ and $(A_2)_D$, are orders of $A_1$ and $A_2$, respectively, and both $A_1$ and $A_2$ are congenial with respect to $D$ and the induced filtration.
Moreover, 
\[(A_1 \tensor_\kk A_2)_D = (A_1 \tensor_D A_2)_D = (A_1)_D \tensor_D (A_2)_D.\]
It follows that $(A_1 \tensor_\kk A_2)_D \tensor_D \kk = A_1 \tensor_\kk A_2$ and so 
$(A_1 \tensor_\kk A_2)_D$ is an order for $A_1 \tensor_\kk A_2$.

(3) It is clear from the filtration that
$\gr_\cF (A_1 \tensor_\kk A_2) = \gr_{\cF_1} A_1 \tensor_\kk \gr_{\cF_2} A_2$ and 
\begin{align}
\label{ex.gr}
\gr_\cF (A_1 \tensor_\kk A_2)_D = \gr_{\cF_1} (A_1)_D \tensor_D \gr_{\cF_2} (A_2)_D.\tag{$\star$}
\end{align}
The argument in (2) shows that $\gr_\cF (A_1 \tensor A_2)_D$ is an
order of $\gr_\cF (A_1 \tensor A_2)$.

(4) Let $C$ be a commutative noetherian $D$-algebra.
Then $\gr_{\cF_2} (A_2)_D \tensor_D C$ is noetherian because $\gr_{\cF_2} (A_2)_D$
is assumed to be strongly noetherian. 
Hence, because $\gr_{\cF_1} (A_1)_{D_1}$ is universally noetherian and so is $\gr_{\cF_1}(A_1)_D$, then by \eqref{ex.gr}, 
\[
\gr_\cF (A_1 \tensor_\kk A_2)_D \tensor_D C
	= \gr_{\cF_1} (A_1)_D \tensor_D (\gr_{\cF_2} (A_2)_D \tensor_D C)
\]
is noetherian. Thus, $\gr_\cF (A_1 \tensor A_2)_D$ is strongly noetherian.

(5) Let $F$ be a finite field and a factor ring of $D$. It is obvious that the tensor product of two affine algebras is affine. Consider the map 
\begin{align*}
(A_1\otimes_\kk A_2)_D\otimes_DF &\xrightarrow{\sim} [(A_1)_D\otimes_DF]\otimes_D[(A_2)_D\otimes_DF] \\
a_1\otimes a_2\otimes t &\mapsto (a_1\otimes 1)\otimes(a_2\otimes t).
\end{align*}
It is clear that this map is an injective $D$-algebra homomorphism. 
Since both $(A_1)_D\otimes_DF$ and $(A_2)_D\otimes_DF$ are PI over $F$, 
then $[(A_1)_D\otimes_DF]\otimes_D[(A_2)_D\otimes_DF]$ is PI, 
and thus $(A_1\otimes A_2)\otimes_DF$ is PI over $F$. 
As $A_1$ is universally noetherian and $(A_2)_D\otimes_DF$ is noetherian by hypothesis,
then $(A_1\otimes_\kk A_2)\otimes_DF=A_1\otimes_{\kk}(A_2\otimes_DF)$ is noetherian.
\end{proof}

\begin{corollary}
Let $A_1,A_2$ be congenial algebras satisfying the conditions of Theorem \ref{thm.tensor}.
Suppose $G$ is a finite subgroup of automorphisms respecting the filtration on $A_1$
such that $\p(A_1,G) \geq 2$.
Then we may identify $G$ within $\Aut(A_1 \tensor A_2)$ and $\p(A_1 \tensor A_2,G) \geq 2$.
Thus, the Auslander map is an isomorphism for $(A_1 \tensor A_2,G)$.
\end{corollary}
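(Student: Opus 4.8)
The plan is to reduce everything to the pertinency of the $A_1$-action, exploiting that $G$ acts only on the first tensor factor. First I would fix the embedding $G \hookrightarrow \Aut(A_1 \tensor A_2)$ given by $g \mapsto g \tensor \id_{A_2}$; since each $g$ preserves $\cF_1$ and acts trivially on $A_2$, it preserves the tensor filtration $\cF$ of Theorem \ref{thm.tensor}, so $G$ acts by filtered automorphisms on $A_1 \tensor A_2$. The structural heart of the argument is the natural isomorphism of skew group algebras
\[ (A_1 \tensor A_2)\# G \;\iso\; (A_1 \# G)\tensor A_2, \qquad (a_1 \tensor a_2)\# g \;\mapsto\; (a_1 \# g)\tensor a_2, \]
which is immediate from the multiplication rules because $G$ fixes $A_2$ pointwise. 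Writing $\tilde f_G = \sum_{g\in G} 1\# g \in A_1\# G$, the generator $f_G = \sum_{g} (1\tensor 1)\# g$ maps to $\tilde f_G \tensor 1$, and since $A_2$ is a free tensor factor the two-sided ideal it generates is $(\tilde f_G)\tensor A_2$. Hence
\[ \big((A_1\tensor A_2)\# G\big)/(f_G) \;\iso\; \big((A_1\# G)/(\tilde f_G)\big)\tensor A_2. \]

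Next I would compute the pertinency through Gelfand--Kirillov dimension. Setting $R = (A_1\# G)/(\tilde f_G)$, the displayed isomorphisms give $\p(A_1\tensor A_2, G) = \GKdim(A_1\tensor A_2) - \GKdim(R\tensor A_2)$. Combining the upper bound $\GKdim(R\tensor A_2)\le \GKdim R + \GKdim A_2$, which always holds, with the reverse inequality $\GKdim(A_1\tensor A_2)\ge \GKdim A_1 + \GKdim A_2$, one obtains
\[ \p(A_1\tensor A_2, G) \;\ge\; \GKdim A_1 - \GKdim R \;=\; \p(A_1, G) \;\ge\; 2. \]
Here $\GKdim A_2$ is finite (congenial algebras have finite GK dimension), so it cancels, and $\GKdim R \le \GKdim(A_1\# G) = \GKdim A_1 <\infty$ since $G$ is finite and $A_1\# G$ is a finite module extension of $A_1$.

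The main obstacle is justifying the lower bound $\GKdim(A_1\tensor A_2)\ge \GKdim A_1 + \GKdim A_2$, because additivity of GK dimension under $\tensor_\kk$ can fail in general: the lower bound requires one factor to have its GK dimension attained as a genuine limit rather than a mere limit superior. I would resolve this by noting that for a congenial algebra $\GKdim A_i = \GKdim \gr_{\cF_i} A_i$, where the associated graded is a finitely generated (locally finite, noetherian) graded $\kk$-algebra and hence has GK dimension attained as a limit; the Krause--Lenagan additivity theorem then supplies the needed inequality (and both factors being affine makes the upper bound routine). With $\p(A_1\tensor A_2, G)\ge 2$ established, the final conclusion follows from the Bao--He--Zhang congeniality criterion: since $A_1\tensor A_2$ is congenial by Theorem \ref{thm.tensor} and $G$ preserves its filtration, \cite[Theorem 4.10]{BHZ2} yields that the Auslander map $\gamma_{A_1\tensor A_2, G}$ is an isomorphism exactly because the pertinency is at least two.
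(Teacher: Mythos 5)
Your proposal is correct and follows essentially the same route as the paper: the paper's proof is a bare citation of Theorem \ref{thm.tensor}, \cite[Theorem 3.12]{GKMW}, and \cite[Theorem 4.10]{BHZ2}, and your argument---the decomposition $(A_1\tensor A_2)\#G \iso (A_1\#G)\tensor A_2$, the identification of the ideal $(f_G)$ with $(\tilde f_G)\tensor A_2$, and the GK-dimension comparison yielding $\p(A_1\tensor A_2,G)\ge\p(A_1,G)\ge 2$---is in substance a self-contained proof of the cited result from \cite{GKMW}. The one point to watch is the lower bound $\GKdim(A_1\tensor A_2)\ge\GKdim A_1+\GKdim A_2$, which you rightly flag; it is exactly the additivity hypothesis under which \cite[Theorem 3.12]{GKMW} operates, and your justification via the locally finite graded structure of the associated graded rings is the intended one.
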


\begin{proof}
This follows from Theorem \ref{thm.tensor}, \cite[Theorem 3.12]{GKMW},
and \cite[Theorem 4.10]{BHZ2}.
\end{proof}

We now consider various families of algebras with (nice) associated graded rings
for which we can prove the congeniality property and a version of Auslander's theorem.

%\section{Enveloping algebras of Lie superalgebras}
%\label{sec.super}

\subsection*{Enveloping algebras of Lie superalgebras}
%The enveloping algebra of a finite-dimensional Lie algebra is congenial \cite[Lemma %4.11]{BHZ2} and we generalize this to finite-dimensional Lie superalgebras.
A {\sf Lie superalgebra} is a $\ZZ_2$-graded $\kk$-vector space $L=L_0 \oplus L_1$ with
a bilinear (super)bracket $[,]$ satisfying
\begin{enumerate}
\item $[L_\alpha,L_\beta] \subset L_{\alpha+\beta}$ (index taken modulo 2),
\item $[x,y] = (-1)^{|x||y|} [y,x]$ (super skew-symmetry), and
\item $(-1)^{|x||z|}[x,[y,z]] + (-1)^{|y||x|}[y,[z,x]] + (-1)^{|z||y|}[z,[x,y]]=0$ (super Jacobi identity),
\end{enumerate}
where $x,y,z$ are homogeneous in $\ZZ_2$-degree and $|x|$ represents the degree of $x$.
The {\sf universal enveloping algebra of L} is the quotient of the tensor algebra $T(L)$
by the ideal of relations generated by
$x \tensor y - (-1)^{|x||y|} y \tensor x - [x,y]$ for all $x,y \in L$.

\begin{lemma}
\label{lem.super1}
If $U(L)$ is the enveloping algebra of a finite-dimensional Lie superalgebra $L$ 
over a field $F$ of characteristic $p>0$, then $U(L)$ is PI.
\end{lemma}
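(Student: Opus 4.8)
The plan is to produce a central subalgebra $Z \subseteq U(L)$ over which $U(L)$ is a finitely generated module. Once this is established, $U(L)$ is PI because any ring that is finitely generated as a module over a central subring satisfies a polynomial identity. Fix a homogeneous basis $x_1,\dots,x_r$ of $L_0$ and $y_1,\dots,y_s$ of $L_1$; by the super PBW theorem, $U(L)$ has $F$-basis the ordered monomials $x_1^{a_1}\cdots x_r^{a_r}y_1^{\epsilon_1}\cdots y_s^{\epsilon_s}$ with $a_i\in\NN$ and $\epsilon_j\in\{0,1\}$.

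The central elements will come from the even generators. For even $x_i$, the inner derivation $\delta_{x_i}=\ad(x_i)$ is an ordinary derivation of $U(L)$ that preserves $L$, since $[L_0,L_\alpha]\subseteq L_\alpha$. Its restriction to the finite-dimensional space $L$ has a minimal polynomial that divides a $p$-polynomial $M_i(t)=\sum_k \alpha_{i,k}t^{p^k}$ by \cite{Ja1,Ja2}. Because $x_i$ is even, the characteristic-$p$ binomial identity gives $\delta_{x_i}^{p^k}(b)=[b,x_i^{p^k}]$ for all $b$, exactly as in the non-super setting, so $z_i:=M_i(x_i)=\sum_k\alpha_{i,k}x_i^{p^k}$ satisfies $[b,z_i]=M_i(\delta_{x_i})(b)=0$ for every $b\in L$ and is therefore central. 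As a polynomial in the single variable $x_i$, each $z_i$ has degree $d_i=p^{N_i}$ with invertible leading coefficient.

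I would then set $Z=F[z_1,\dots,z_r]$, a commutative central subalgebra, and show that $U(L)$ is generated as a $Z$-module by the finitely many monomials with $0\le a_i<d_i$ and $\epsilon_j\in\{0,1\}$. Any PBW monomial with some $a_i\ge d_i$ is reduced by substituting $x_i^{d_i}=\alpha_{i,N_i}^{-1}\bigl(z_i-\sum_{k<N_i}\alpha_{i,k}x_i^{p^k}\bigr)$: the central factor $z_i$ passes into $Z$, while the remaining terms strictly decrease the exponent of $x_i$, and an induction on the exponents finishes the reduction. The odd generators need no central elements, since $y_j^2=\tfrac12[y_j,y_j]\in L_0$ forces $\epsilon_j\le 1$ in the PBW basis.

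The main obstacle is the sign bookkeeping required to confirm that the even generators still yield central elements: one must verify that $\ad(x_i)$ is a genuine (unsigned) derivation for even $x_i$, so that the classical $p$-power identity $\delta_{x_i}^{p^k}(\,\cdot\,)=[\,\cdot\,,x_i^{p^k}]$ holds verbatim despite the super structure. A secondary delicacy is characteristic $2$, where $y_j^2\in L_0$ relies on $2$ being invertible and the odd part must instead be controlled directly. Granting these points, module-finiteness over $Z$ yields the polynomial identity.
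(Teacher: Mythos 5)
Your argument is correct, but it takes a genuinely different and more self-contained route than the paper. The paper's proof is a two-step reduction by citation: $U(L_0)$ is module-finite over its center by Jacobson's characteristic-$p$ result \cite[Lemma VI.5]{Ja1}, and $U(L)$ is module-finite over $U(L_0)$ by \cite[Proposition 2]{behr} (essentially the super PBW theorem, since the odd variables contribute only $2^s$ monomials); hence $U(L)$ is module-finite over a commutative subring and therefore PI. You instead re-run Jacobson's construction inside $U(L)$ itself: for each even basis element $x_i$ you produce $z_i=M_i(x_i)$ with $M_i$ a $p$-polynomial annihilating $\ad(x_i)$ on $L$, and the key point---which you handle correctly---is to take the minimal polynomial of $\ad(x_i)$ on all of $L$ rather than on $L_0$ alone, so that $z_i$ commutes with every generator and is genuinely central in $U(L)$ (the central elements of $U(L_0)$ furnished by Jacobson's lemma need not be central in $U(L)$). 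This buys you module-finiteness over a central polynomial subalgebra $Z=F[z_1,\dots,z_r]$, from which PI follows by the most elementary embedding $U(L)\hookrightarrow\End_Z(U(L))$; the paper's route gives module-finiteness only over a commutative, possibly non-central subring and so implicitly invokes the slightly stronger general fact that such rings are PI. Your intermediate steps all check out: $\ad(x_i)$ is an honest (unsigned) derivation preserving $L$ because $x_i$ is even, the identity $(\ad a)^{p^k}=\ad(a^{p^k})$ holds for the associative commutator in any characteristic-$p$ ring since left and right multiplication by $a$ commute, and the exponent reduction for $a_i\ge d_i$ works because each $z_i$ is central and is a polynomial in $x_i$ alone with invertible leading coefficient. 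The characteristic-$2$ caveat you raise is real but is shared by the paper's proof: both arguments rest on the super PBW basis with $\epsilon_j\le 1$, which uses $2y_j^2=[y_j,y_j]$.
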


\begin{proof}
Since $L_0$ is an (ordinary) Lie algebra, then $U(L_0)$ is module-finite over its center \cite[Lemma VI.5]{Ja1}.
By \cite[Proposition 2]{behr}, $U(L)$ is module-finite over $U(L_0)$ and so the result follows.
\end{proof}

\begin{theorem}
\label{thm.super}
The enveloping algebra of a finite-dimensional Lie superalgebra $L$ is congenial.
\end{theorem}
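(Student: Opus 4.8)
The plan is to verify the five conditions in Definition~\ref{defn.congenial} directly for $U(L)$, using the standard PBW filtration and mimicking the argument for ordinary enveloping algebras in \cite[Lemma 4.11]{BHZ2}, with Lemma~\ref{lem.super1} supplying the extra input needed in positive characteristic. First I would fix an ordered homogeneous basis $\{x_1,\dots,x_m\}$ of $L$, compatible with the decomposition $L=L_0\oplus L_1$, and put the PBW filtration $\cF$ on $U(L)$ by declaring each $x_i$ to have degree one. The super-PBW theorem gives a vector space basis of $U(L)$ consisting of ordered monomials $x_1^{a_1}\cdots x_m^{a_m}$ with $a_i\in\NN$ for $x_i\in L_0$ and $a_i\in\{0,1\}$ for $x_i\in L_1$; this makes $U(L)$ locally finite, so condition~(1) holds, and it identifies $\gr_\cF U(L)$ with the tensor product of a polynomial algebra on the even generators and an exterior algebra on the odd ones.

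For condition~(2), I would write the (super)brackets in the chosen basis as $[x_i,x_j]=\sum_k c_{ijk}x_k$ and let $D$ be the $\kk_0$-subalgebra of $\kk$ generated by the finitely many structure constants $c_{ijk}$. Setting $U(L)_D$ to be the $D$-span of the ordered PBW monomials gives a free $D$-module that is a $D$-subalgebra (the brackets stay in $U(L)_D$ by choice of $D$) with $U(L)_D\tensor_D\kk=U(L)$, hence an order, filtered by the induced $\cF$. Condition~(3) then follows because $\gr_\cF U(L)_D$ is the $D$-span of the same monomials, i.e. the super-symmetric algebra over $D$, which tensors up to $\gr_\cF U(L)$; this is the integral form of the associated graded ring. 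For condition~(4), $\gr_\cF U(L)_D$ is a tensor product of a polynomial ring and an exterior algebra over $D$, hence a module-finite extension of a polynomial ring over $D$; such algebras are strongly noetherian (polynomial rings over $D$ are strongly noetherian, and strong noetherianity passes to module-finite extensions), and they are plainly locally finite and connected graded.

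The main obstacle is condition~(5): for a finite-field factor $F$ of $D$, I must show $U(L)_D\tensor_D F$ is affine noetherian PI over $F$. Affineness is immediate since the generators reduce, and noetherianity follows because reduction mod $p$ of the associated graded ring is still a module-finite extension of a polynomial ring over $F$ hence noetherian, lifting through the filtration to $U(L)_D\tensor_D F$. The PI property is exactly where characteristic zero fails and where Lemma~\ref{lem.super1} is essential: I would identify $U(L)_D\tensor_D F$ with the enveloping algebra $U(L_F)$ of the reduced Lie superalgebra $L_F=L_D\tensor_D F$ over the finite field $F$ of characteristic $p>0$, and then Lemma~\ref{lem.super1} gives that this is PI. Assembling these five verifications completes the proof that $U(L)$ is congenial with respect to the PBW filtration. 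The only subtlety I anticipate is bookkeeping the freeness and compatibility of the integral forms $U(L)_D$ and $\gr_\cF U(L)_D$ across the exterior (odd) part, but the $\{0,1\}$-exponent PBW basis handles this uniformly, so no genuine difficulty beyond the positive-characteristic PI input should arise.
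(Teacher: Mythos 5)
Your proposal is correct and follows essentially the same route as the paper's proof: the PBW filtration, the order $U(L)_D$ over the ring $D$ generated by the structure constants, the identification $\gr_\cF U(L)_D \iso D[x_1,\dots,x_t]\tensor_D\bigwedge_D(x_{t+1},\dots,x_p)$, and the reduction $U(L)_D\tensor_D F\iso U(L_F)$ combined with Lemma~\ref{lem.super1} for the PI condition in step (5). The only cosmetic difference is in step (4), where the paper cites the fact that affine noetherian PI algebras are strongly noetherian while you argue directly via module-finiteness over a polynomial ring; both are valid.
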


%\begin{proof}If $U_n$ is the PBW filtration of $U(L)$, then $U(L)$ is generated by $U_1$. Additionally, for any $x,y\in U_1$, $xy-yx\in U_1$, and $\x{gr}\,U(L)$ is affine Noetherian since $L$ is finite-dimensional. It follows that $U(L)$ is congenial by Theorem \ref{thm.gen}.\end{proof}

\begin{proof}
(1) Set $A=U(L)$.
It follows from the PBW filtration that $A$ is a locally finite filtered algebra.

(2) Let $\{x_1,...,x_n\}$ be a basis for $L$. Write $[x_i,x_j]=\sum_{p=1}^n\alpha_p(i,j)x_p$ where each $\alpha_p(i,j)\in \kk$. Let $D$ be the subring of $\kk$ generated by the set $\{\alpha_p(i,j):i,j,p\leq n\}$.

Define $L_D$ to be the Lie superalgebra over $D$ with $L_D\otimes_D \kk=L$ and
\[ A_D:=U(L_D)=D\langle x_1,...,x_n\rangle/(x_ix_j-(-1)^{|x_i||x_j|}x_jx_i-\sum_{p=1}^n\alpha_p(i,j)x_p).\]
It follows that $A_D\otimes_D\kk=A$.
Let $\cF$ be the PBW filtration on $A$.
Then $\cF$ descends to a filtration on $A_D$, whence
$A_D$ is a locally finite filtered algebra over $D$.

(3) Furthermore, we see that $\gr_{\cF}A_D\otimes_D \kk=\gr A$, since
\[ \gr_{\cF}A_D=D[x_1,...,x_t]\otimes_D\bigwedge_D(x_{t+1},...,x_p),\]
where $\bigwedge_D(x_{t+1},...,x_p)$ is the exterior algebra over $D$, and $x_1,..,x_t$ is a basis for the (ordinary) Lie subalgebra $L_0$.

(4) Since $\gr_{\cF}A_D$ is PI noetherian, then $\gr_{\cF}A_D$ is strongly noetherian \cite[Proposition 4.4]{ASZ}.

(5) Let $F$ be a factor ring of $D$ that is a finite field.
Then $A_D \tensor F = U(L_D \tensor_D F)$ and so $A_D \tensor_D F$ is affine noetherian. 
The PI condition now follows from Lemma \ref{lem.super1}.
\end{proof}

\begin{corollary}
\label{cor.super}
%Suppose $\ch\kk = 0$.
Let $L$ be a finite-dimensional Lie superalgebra, $R=U(L)$, and let
$G$ be a finite group of automorphisms of $R$ respecting the filtration of $R$.
If $G$ restricts to a small subgroup of $\Aut_{\mathrm{Lie}}(L_0)$,
then there is a natural isomorphism of algebras $R\#G \iso \End_{R^G}(R)$.
\end{corollary}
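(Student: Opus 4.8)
The plan is to verify that the pair $(R,G)$ satisfies the hypotheses needed to invoke the congeniality machinery of \cite{BHZ2}, so that pertinency at least two translates into the Auslander map being an isomorphism. By Theorem \ref{thm.super}, $R=U(L)$ is congenial, and by assumption $G$ is a finite group of filtered automorphisms, so the results of \cite{BHZ2} (specifically the congenial-algebra version of the pertinency criterion, e.g. \cite[Theorem 4.10]{BHZ2}) apply directly to $(R,G)$. It therefore suffices to prove that $\p(R,G) \geq 2$. The strategy is to reduce this pertinency computation on $U(L)$ to a pertinency computation on the associated graded ring, and then further to the Lie algebra part $L_0$, where smallness of the $G$-action can be exploited through the classical Auslander theorem.

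First I would pass to the associated graded ring. Since $G$ respects the PBW filtration, it induces an action on $\gr_\cF R$, which by the computation in the proof of Theorem \ref{thm.super} is $\kk[x_1,\dots,x_t] \tensor_\kk \bigwedge(x_{t+1},\dots,x_p)$, the tensor product of a polynomial algebra on the even part with an exterior algebra on the odd part. The key filtered-to-graded principle from \cite{BHZ2} is that pertinency does not decrease under passage to $\gr$ (indeed the relevant lower bound $\p(R,G) \geq \p(\gr R, \gr G)$ holds because $\GKdim$ is preserved and the defining ideal behaves well under the filtration), so it is enough to bound the pertinency of the graded action. Next I would analyze the $G$-action on the graded algebra. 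Because $G$ restricts to a subgroup of $\Aut_{\mathrm{Lie}}(L_0)$, its action on the even generators $x_1,\dots,x_t$ is linear and respects the splitting, so the quotient $(\gr R \# G)/(f_G)$ factors through the polynomial part $\kk[x_1,\dots,x_t]$ in a way that allows one to separate the exterior (nilpotent) variables.

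The main step is then to show that smallness of $G$ on $L_0$ forces the pertinency of the action on the polynomial factor to be at least two. This is exactly where the classical Auslander theorem enters: a finite group acting linearly and without nontrivial reflections on $\kk[x_1,\dots,x_t]$ gives $\p(\kk[x_1,\dots,x_t],G) \geq 2$, which is equivalent to the Auslander map for that pair being an isomorphism by \cite{A} together with the pertinency characterization. The technical content is to transfer this bound from the pure polynomial algebra up to the algebra with the exterior factor attached, showing that the nilpotent odd variables contribute to neither the GK dimension drop nor the structure of $(f_G)$ in a way that could lower the pertinency below two.

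The hard part will be this last transfer: controlling how the exterior variables interact with the ideal $(f_G)$ and confirming that they do not create unexpected zero divisors or shrink the image quotient so as to reduce pertinency. In particular, one must check that the reflections being excluded on $L_0$ genuinely suffice, since $G$ could a priori act nontrivially (but not as a reflection on $L_0$) in a way mediated through the odd part; the smallness hypothesis is stated only on $\Aut_{\mathrm{Lie}}(L_0)$, so I would need to confirm that the graded action on the full tensor product inherits the no-reflection condition on its relevant (polynomial) directions. Once the pertinency bound $\p(R,G)\geq 2$ is secured, the conclusion $R \# G \iso \End_{R^G}(R)$ follows immediately from the congeniality of $R$ and \cite[Theorem 4.10]{BHZ2}.
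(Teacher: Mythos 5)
Your proposal follows essentially the same route as the paper: congeniality of $R=U(L)$ plus the Cohen--Macaulay property reduce the claim via \cite[Theorem 4.10]{BHZ2} to showing $\p(R,G)\geq 2$, which is obtained by passing to $\gr(R)\iso \kk[x_1,\dots,x_t]\tensor_\kk\bigwedge_\kk(x_{t+1},\dots,x_p)$ using $\p(R,G)\geq\p(\gr(R),G)$ (\cite[Proposition 3.3]{BHZ2}) and then invoking smallness of $G$ on the polynomial factor. The one step you flag as the ``hard part''---transferring the bound $\p(\kk[x_1,\dots,x_t],G)\geq 2$ across the exterior factor---is exactly where the paper cites the proof of \cite[Theorem 3.13]{GKMW}, which gives $\p(\gr(R),G)=\p(\kk[x_1,\dots,x_t],G)$, so your plan is sound once that identity is supplied.
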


\begin{proof}
By Theorem \ref{thm.super}, $R$ is congenial. 
It is well-known that $R$ is CM, and hence by \cite[Theorem 4.10]{BHZ2},
it is sufficient to show that $\p(R,G) \geq 2$.
The $G$-action on $R$ induces a $G$-action on 
$\gr(R) \iso \kk[x_1,...,x_t]\tensor_{\kk} \bigwedge_{\kk}(x_{t+1},...,x_p)$.
In particular, because $G \subset \Aut_{\mathrm{Lie}}(L_0)$, then $G$
restricts to an action on $\kk[x_1,...,x_t]$ such that
$\p(\kk[x_1,...,x_t],G) \geq 2$ by \cite[Theorem 4.12]{BHZ2}.
It follows from the proof of \cite[Theorem 3.13]{GKMW} that
$\p(\gr(R),G) = \p(\kk[x_1,...,x_t],G)$.
Now by \cite[Proposition 3.3]{BHZ2}, $\p(R,G) \geq \p(\gr(R),G) \geq 2$.
\end{proof}

Here is one example to which Theorem \ref{thm.super} applies.

\begin{example}
\label{ex.super}
Consider the Lie superalgebra $L=\pl(1|1)$ generated by
\[
x_1 = \begin{pmatrix}1 & 0 \\ 0 & 1\end{pmatrix}, \quad
x_2 = \begin{pmatrix}1 & 0 \\ 0 & 0\end{pmatrix}, \quad
y_1 = \begin{pmatrix}0& 1 \\ 0 & 0\end{pmatrix}, \quad
y_2 = \begin{pmatrix}0 & 0 \\ 1 & 0\end{pmatrix},
\]
where $\{x_1,x_2\}$ generate $L_0$ and $\{y_1,y_2\}$ generate $L_1$.
The element $x_1$ is central in $U(L)$ while $y_1,y_2$ are nilpotent with $y_1^2=y_2^2=0$.
The remaining relations in $U(L)$ are
$y_1y_2+y_2y_1=x_1$, $x_2y_1-y_1x_2=y_1$, and $x_2y_2-y_2x_2=-y_2$.
Define a family of automorphisms parameterized by $\lambda\in \kk^\times$ by
\[
\phi(x_1)=\lambda x_1, \quad \phi(x_2)=-x_2, \quad \phi(y_1)=y_2, \quad \phi(y_2)=\lambda y_1.
\]
If $|\lambda|<\infty$, then $\phi$ restricts to a finite subgroup of $U(L_0)$
and if in addition $\lambda\neq 1$, then the group $G=\langle\phi\rangle$ is small
when restricted to $\gr U(L_0)=\kk[x_1,x_2]$.
It follows that the Auslander map is an isomorphism for the pair $(U(L),G)$.
\end{example}

\subsection*{Iterated differential operator rings}
Let $R$ be an algebra and $\delta$ a derivation on $R$.
That is, $\delta$ obeys the Leibniz rule, $\delta(rs)=r\delta(s)+\delta(r)s$
for all $r,s \in R$.
The (over)ring $R[x;\delta]$ generated by $R$ and $x$ with the rule
$xr=rx+\delta(r)$ is called a {\sf differential operator ring over $R$}.

Consider an iterated differential operator ring $A=\kk[x_1][x_2;\delta_2]\cdots[x_n;\delta_n]$.
We define a filtration $\cF$ on $A$ by setting
$\deg(x_1)=1$ and $\deg(x_k)=\max\{\deg(\delta(x_j)) : j < k\}$
for $k>1$.

\begin{theorem}
\label{thm.iterated}
If $A$ is an iterated Ore extension of derivation type over $\kk$, then $A$ is congenial.
\end{theorem}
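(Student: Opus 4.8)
The plan is to verify the five conditions of Definition~\ref{defn.congenial} directly, everything resting on the fact that the filtration $\cF$ is arranged precisely so that $\gr_\cF A$ is a (weighted) commutative polynomial algebra. For $j<k$ the defining relation $x_kx_j-x_jx_k=\delta_k(x_j)$ has right-hand side of degree at most $\deg(x_k)<\deg(x_k)+\deg(x_j)$, so $\overline{x_k}$ and $\overline{x_j}$ commute in $\gr_\cF A$; combined with the standard-monomial (PBW) basis $\{x_1^{a_1}\cdots x_n^{a_n}\}$ of an iterated Ore extension, this gives $\gr_\cF A\iso\kk[x_1,\dots,x_n]$. To build an order, let $D$ be the $\kk_0$-subalgebra of $\kk$ generated by the finitely many coefficients of the polynomials $\delta_k(x_j)$, and set $A_D:=D[x_1][x_2;\delta_2]\cdots[x_n;\delta_n]$, the iterated Ore extension over $D$; this is well defined because each $\delta_k$ sends $D$-coefficient polynomials to $D$-coefficient polynomials.

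Conditions (1)--(4) are then routine. Local finiteness of $\cF$ holds because there are finitely many generators, each of positive integer degree, and $A$ has the standard-monomial basis. Since $A_D$ is free over $D$ on the standard monomials with $A_D\tensor_D\kk=A$, it is an order of $A$ for the induced filtration, giving (2). The same degree bookkeeping yields $\gr_\cF A_D=D[x_1,\dots,x_n]$, a free $D$-module with $\gr_\cF A_D\tensor_D\kk=\gr_\cF A$, so it is an order of $\gr_\cF A$, giving (3). Finally $\gr_\cF A_D$ is a commutative polynomial ring over the noetherian $\kk_0$-affine ring $D$, and $D[x_1,\dots,x_n]\tensor_D C=C[x_1,\dots,x_n]$ is noetherian for every commutative noetherian $D$-algebra $C$ by the Hilbert basis theorem, so $\gr_\cF A_D$ is strongly noetherian, giving (4).

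The heart of the proof is condition (5): for a finite field $F$ that is a factor ring of $D$, I must show $A_F:=A_D\tensor_D F=F[x_1][x_2;\delta_2]\cdots[x_n;\delta_n]$ is affine, noetherian, and PI. Affineness and the noetherian property are immediate, since an iterated Ore extension over the noetherian ring $F$ is noetherian, so the real content is the PI property in characteristic $p$. My plan is to prove, by induction on $n$, that $A_F$ is module-finite over an affine central subalgebra. Writing $A_F=B[x_n;\delta_n]$ with $B=F[x_1]\cdots[x_{n-1};\delta_{n-1}]$, the inductive hypothesis supplies an affine central subalgebra $Z_0\subseteq B$ over which $B$ is module-finite; because $F$ is finite (hence perfect), $B$ remains module-finite over the Frobenius-twisted subalgebra $Z_0^{[p^s]}$ for every $s$. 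The decisive point is that $\delta_n$ annihilates $Z_0^{[p^s]}$: for central $z$ one computes $\delta_n(z^{p^s})=p^sz^{p^s-1}\delta_n(z)=0$. Thus $\delta_n$ is a $Z_0^{[p^s]}$-linear endomorphism of the finitely generated module $B$, so by Cayley--Hamilton it satisfies a monic polynomial $q(t)$ with coefficients in $Z_0^{[p^s]}$; the $p$-polynomial theory of \cite{Ja1,Ja2} then produces a $p$-polynomial $M_n(t)=\sum_i\alpha_i t^{p^i}$ divisible by $q$, so that $M_n(\delta_n)=0$ on $B$. Since $\ad(x_n)$ restricts to $\delta_n$ on $B$ and $(\ad x_n)^{p^i}=\ad(x_n^{p^i})$ in characteristic $p$, the element $M_n(x_n)=\sum_i\alpha_i x_n^{p^i}$ commutes with all of $B$ and with $x_n$, hence is central in $A_F$. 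Then $A_F$ is module-finite over the affine central subalgebra generated by $Z_0^{[p^s]}$ and $M_n(x_n)$, and therefore PI.

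The main obstacle is this construction of central elements when a generator has filtration degree at least two. When all generators sit in degree one the relations close into a Lie bracket, $A$ is almost commutative, and congeniality already follows from Corollary~\ref{cor.pbw}; the genuinely new case (such as the Jordan plane $F[x][y;\delta]$ with $\delta(x)=x^2$, where $\deg(y)=2$) is exactly where $\ad(x_k)$ raises filtration degree and so preserves no finite-dimensional generating space, so the derivation $\delta_k$ need not be nilpotent and the naive finite-dimensional Jacobson argument does not apply directly. The device above sidesteps this by making $\delta_n$ linear over the \emph{central} subring $Z_0^{[p^s]}$ before invoking Cayley--Hamilton, but it requires care at two points: checking that the center remains affine at each stage of the induction, and applying the $p$-polynomial theorem when the coefficient ring $Z_0^{[p^s]}$ is not a field (which I expect to handle by passing to its residue fields, all of which are finite extensions of $F$ and hence again finite fields). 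Finally, it is worth stressing that positive characteristic is indispensable here: over a field of characteristic zero the Weyl algebra has commutative associated graded yet is not PI, so the reduction-mod-$p$ order $A_D$ is doing essential work.
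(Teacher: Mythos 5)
Your steps (1)--(4) follow the paper's proof essentially verbatim: same filtration, same choice of $D$ generated by the coefficients of the $\delta_i(x_j)$, same identification $\gr_\cF A_D = D[x_1,\dots,x_n]$ (the paper cites \cite[Proposition 4.1]{ASZ} for strong noetherianity where you invoke the Hilbert basis theorem directly, which is equally valid here since the associated graded ring is commutative). The real divergence is in condition (5). The paper disposes of the PI property in one line by citing \cite[Proposition 9]{LP}; you instead reprove that result from scratch by an induction producing, at each stage, an affine central subalgebra over which the algebra is module-finite, via Frobenius twists $Z_0^{[p^s]}$, Cayley--Hamilton, and the identity $(\ad x_n)^{p^i}=\ad(x_n^{p^i})$ in characteristic $p$. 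What your route buys is a self-contained and more informative statement (explicit central elements $M_n(x_n)$ and an explicit finite module structure over an affine center); what the citation buys is brevity. Your correct observation that the naive finite-dimensional Jacobson argument fails when some $\deg(x_k)\geq 2$ is exactly the point that makes the citation (or your replacement for it) necessary.

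One step in your argument does need repair, and your proposed fix is not the right one. You want a $p$-polynomial $M_n(t)=\sum_i\alpha_i t^{p^i}$ with $M_n(\delta_n)=0$, and you propose to obtain it from the Cayley--Hamilton polynomial $q(t)$ over $Z:=Z_0^{[p^s]}$ by the divisibility theorem of \cite{Ja1}; but that theorem is a statement over fields, and passing to the residue fields of $Z$ produces a different $p$-polynomial over each residue field with no evident way to glue them into a single central element of $A_F$. The standard remedy avoids divisibility entirely: since $\delta_n$ satisfies a monic polynomial of degree $d$ over $Z$, the $Z$-module $Z+Z\delta_n+\cdots+Z\delta_n^{d-1}\subseteq\End_Z(B)$ is finitely generated, hence noetherian as $Z$ is affine over $F$; the ascending chain of submodules $N_i=\sum_{j\leq i}Z\delta_n^{p^j}$ therefore stabilizes, yielding a relation $\delta_n^{p^N}=\sum_{j<N}\beta_j\delta_n^{p^j}$ with $\beta_j\in Z$, which is exactly the $p$-polynomial identity you need. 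With that substitution the rest of your construction of $M_n(x_n)$ and the conclusion that $A_F$ is module-finite over an affine central subalgebra go through.
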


\begin{proof}
(1) Using the filtration $\cF$ defined above, we see that $A$ is a locally finite filtered algebra. 
%By definition, $A$ is locally finite and it is and $A$ is noetherian by a version of the Hilbert Basis Theorem. It is a filtered algebra with the filtration $\cF$ defined above.

(2) Set $D$ to be the $\kk_0$-affine subalgebra of $\kk$ defined by
\[ D = \kk_0 \cup \{\text{coefficients appearing in nonzero summands of $\delta_i(x_j)$ for $i>j$}\}.
\]
By the Leibniz rule, $A_D = D[x_1;\delta_1]\cdots[x_n;\delta_n]$, whence locally finite and the filtration defined above also defines a filtration on $A_D$.

(3) We have $\gr_{\cF}(A) = \kk[x_1,\hdots,x_n]$ and $\gr_{\cF}(A_D) = D[x_1,\hdots,x_n]$,
so clearly $\gr_{\cF}(A_D)$ is an order of $\gr_{\cF}(A)$.

(4) The algebra $\gr_{\cF}(A_D)$ is locally finite over $D$ and by \cite[Proposition 4.1]{ASZ} it is strongly noetherian.

(5) Let $F$ be a factor ring of $D$.
Then $A_D \tensor_D F = (D \tensor_D F)[x_1;\delta_1]\cdots[x_n;\delta_n]$ is noetherian.
Moreover, it is PI by \cite[Proposition 9]{LP}.
\end{proof}

The filtration that we put on $A$ in the previous lemma may not appear to be the most natural one. 
One must be careful because, to apply the lemma to pertinency, the automorphisms must respect the filtration. 
Thankfully this is the case for the algebras we consider.

Let $A=\kk[x][y;\delta]$ with $\delta(x)=p(x) \in \kk[x]$ and $\phi \in \Aut(A)$.
By \cite[Proposition 3.6]{AD2}, $\phi$ is triangular of the form
\[ \phi(x) = \lambda x + \kappa, \quad \phi(y) = \mu y + h(x),\]
$\lambda,\mu \in \kk^\times$, $\kappa \in \kk$, and $h(x) \in \kk[x]$ 
such that $p(\lambda x + \kappa)=\lambda\mu p(x)$.
In fact, adjusting the proof slightly above by setting $\deg_{\cF}(y) = \max\{p(x),h(x)\}$,
the argument goes through and we have that $\phi$ is a filtered automorphism.
Furthermore, $\phi$ restricts to a diagonal automorphism on $\gr_{\cF} A = \kk[x,y]$.

\begin{corollary}
\label{cor.diff}
Let $A=\kk[x][y;\delta]$ and let $G \subset \Aut(A)$ be a finite group that is small
when restricted to automorphisms of $\gr_{\cF} A = \kk[x,y]$.
Then $A\#G \iso \End_{A^G} A$.
\end{corollary}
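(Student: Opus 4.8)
The plan is to follow the template of the proof of Corollary \ref{cor.super}, reducing the isomorphism statement to a pertinency bound and then transporting that bound from the associated graded ring. First I would invoke Theorem \ref{thm.iterated} to conclude that $A=\kk[x][y;\delta]$ is congenial, and observe that $\gr_\cF A=\kk[x,y]$ is a (commutative) polynomial algebra, so that $A$ is Cohen--Macaulay. Since $A$ is congenial, CM, and $G$ preserves the filtration, \cite[Theorem 4.10]{BHZ2} applies, and it therefore suffices to establish $\p(A,G)\ge 2$.

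Next I would make precise the sense in which $G$ acts on the associated graded. Using the triangular normal form $\phi(x)=\lambda x+\kappa$, $\phi(y)=\mu y+h(x)$ supplied by \cite[Proposition 3.6]{AD2} together with the discussion preceding this corollary, each individual $\phi\in G$ becomes a filtered automorphism once $\deg_\cF(y)$ is taken large enough and then restricts to a diagonal automorphism of $\gr_\cF A=\kk[x,y]$. Because $G$ is finite, I would fix one filtration that serves the whole group at once, e.g.\ by setting $\deg_\cF(y)=\max\bigl(\{\deg p(x)\}\cup\{\deg h_\phi(x)+1:\phi\in G\}\bigr)$; one checks this value still dominates $\deg p(x)$ (so that Theorem \ref{thm.iterated} applies verbatim and $\gr_\cF A=\kk[x,y]$) and strictly exceeds each $\deg h_\phi(x)$ (so that the leading term of $\phi(y)$ is simply $\mu y$ and the induced action is genuinely linear). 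In particular $G$ acts linearly on $\gr_\cF A$, and by hypothesis this action is small.

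With a small linear action of $G$ on $\gr_\cF A=\kk[x,y]$ in hand — a polynomial ring of GK dimension $2$ — I would then apply \cite[Theorem 4.12]{BHZ2} exactly as in Corollary \ref{cor.super} to obtain $\p(\gr_\cF A,G)\ge 2$. Feeding this into \cite[Proposition 3.3]{BHZ2} gives $\p(A,G)\ge \p(\gr_\cF A,G)\ge 2$, and \cite[Theorem 4.10]{BHZ2} then yields the desired isomorphism $A\#G\iso \End_{A^G}A$. The main obstacle is not any of these citations but the filtration bookkeeping in the second step: one must exhibit a single $\NN$-filtration respected by every element of $G$ for which $A$ remains congenial \emph{and} for which the induced action on $\gr_\cF A$ is linear, since a careless choice of $\deg_\cF(y)$ (for instance letting it merely equal some $\deg h_\phi$) would make the leading term of $\phi(y)$ mix $y$ with a power of $x$ and destroy the linearity needed to invoke \cite[Theorem 4.12]{BHZ2}. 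Choosing $\deg_\cF(y)$ strictly above the competing degrees, as above, is what resolves this.
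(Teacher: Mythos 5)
Your proposal is correct and follows essentially the same route as the paper: congeniality from Theorem \ref{thm.iterated} plus the CM property reduces the claim via \cite[Theorem 4.10]{BHZ2} to showing $\p(A,G)\geq 2$, which is obtained by passing to $\gr_\cF A=\kk[x,y]$ and lifting the pertinency bound for the small linear action (the paper cites the classical Auslander theorem together with \cite[Proposition 3.6]{BHZ2} where you cite \cite[Theorem 4.12]{BHZ2} and \cite[Proposition 3.3]{BHZ2}, but these accomplish the same thing). Your extra care in choosing a single filtration degree for $y$ that works for all of $G$ and keeps the induced action diagonal is exactly the point the paper handles in the discussion immediately preceding the corollary.
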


\begin{proof}
The algebra $A$ is congenial by Theorem \ref{thm.iterated}.
Of course, $\kk[x,y]$ is CM and so $A$ is CM by \cite[Lemma 4.4]{SZ}.
It suffices to show that $\p(A,G)\geq 2$ by \cite[Theorem 4.10]{BHZ2}.
Then by the classical Auslander Theorem \cite{A} and \cite[Proposition 3.6]{BHZ2} we have
$\p(A,G) \geq \p(\gr(A),G) = 2$.
\end{proof}

\begin{comment}
An easy check shows that 
\begin{align*}
	\phi^m(x)&=\lambda^m x + \kappa [m]_\lambda, \\
    \phi^m(y)&=\mu^m y + \mu^{m-1}[m]_\lambda p(x).
\end{align*}
Thus, $|\phi|<\infty$ if and only if $|\phi|$ divides $|\mu|$ and $|\lambda|$.
\end{comment}

\subsection*{Quantized Weyl algebras}
Let $T$ be an integral domain, $\bq \in M_n(T^\times)$ satisfying
$q_{ii}=1$ and $q_{ij}q_{ji}=1$ for all $i\neq j$,
and $\gamma=(\gamma_i) \in (T^\times)^n$.
The {\sf quantized Weyl algebra} $\qwa(T)$ is the algebra with basis $\{x_1,y_1,\hdots,x_n,y_n\}$ subject to the relations
\begin{align*}
	y_iy_j &= q_{ij} y_jy_i & & (\text{all } i,j) & x_iy_j &= q_{ji} y_jx_i & & (i < j) \\
	x_ix_j &= \gamma_i q_{ij} x_jx_i & & (i < j) & x_iy_j &= \gamma_j q_{ji} y_jx_i & & (i > j) \\
	x_jy_j &= 1 + \gamma_j y_jx_j + \sum_{l<j}(\gamma_l-1)y_lx_l & & 
			(\text{all } j).
\end{align*}
There is a standard filtration $\cF$ on $\qwa(T)$ defined by setting $\deg(x_i)=\deg(y_i)=i$.
Another filtration, which we do not consider here, sets all generators in degree one.

\begin{theorem}
\label{thm.qweyl}
The quantized Weyl algebra $\qwa(\kk)$ is congenial.
\end{theorem}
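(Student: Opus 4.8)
The plan is to verify the five conditions of Definition \ref{defn.congenial} in turn, as in Theorems \ref{thm.super} and \ref{thm.iterated}, the crucial feature being that the standard filtration degenerates $\qwa(\kk)$ to a quantum affine space. For (1), the filtration $\cF$ with $\deg(x_i)=\deg(y_i)=i$ is an $\NN$-filtration and, since $A$ is generated by finitely many elements each in positive degree, $A$ is locally finite. For (2), I would take $D$ to be the $\kk_0$-affine subalgebra of $\kk$ generated by the parameters $q_{ij}, q_{ij}^{-1}, \gamma_i$, and $\gamma_i^{-1}$ (the elements $\gamma_l-1$ then lie in $D$ automatically). Then $A_D := \qwa(D)$ is free over $D$ on the standard monomials, $\cF$ restricts to a locally finite filtration on $A_D$, and $A_D \tensor_D \kk = A$, so $A_D$ is an order of $A$.

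The crucial step is (3). Under $\deg(x_i)=\deg(y_i)=i$ every defining relation except the last is homogeneous of degree $i+j$, while the relation $x_jy_j = 1 + \gamma_j y_jx_j + \sum_{l<j}(\gamma_l-1)y_lx_l$ has leading part (of degree $2j$) equal to $x_jy_j - \gamma_j y_jx_j$, because the constant term and each $y_lx_l$ with $l<j$ lie in strictly lower filtration degree. Hence I expect $\gr_\cF A$ to be the quantum affine space on $x_1,\dots,y_n$ in which the mixed relation becomes $x_jy_j = \gamma_j y_jx_j$, with $\gr_\cF A_D$ the same quantum affine space over $D$. The point that needs care is confirming that the standard monomials remain a $D$-basis of $\gr_\cF A_D$, so that no additional degeneration occurs; this is a PBW-type Hilbert series computation after which $\gr_\cF A_D$ is visibly an order of $\gr_\cF A$. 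Note that this is exactly why one uses $\deg(x_i)=\deg(y_i)=i$ rather than placing all generators in degree one: the latter would keep the terms $y_lx_l$ in top degree and give a different associated graded.

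For (4), the algebra $\gr_\cF A_D$ is a quantum affine space over $D$, i.e. an iterated Ore extension of automorphism type, and such algebras are strongly noetherian over $D$ by \cite{ASZ}, as in Theorem \ref{thm.iterated}. For (5), let $F$ be a finite factor field of $D$. Then $A_D \tensor_D F = \qwa(F)$ is an affine iterated Ore extension over $F$, hence noetherian. Since $F^\times$ is a finite cyclic group, the images of all parameters $q_{ij}$ and $\gamma_i$ are roots of unity, and I would deduce the PI property exactly in the spirit of Theorem \ref{thm.iterated} (via the PI criterion for Ore extensions over a finite field, as used there through \cite{LP}) or, as in Lemma \ref{lem.super1}, by exhibiting suitable central powers of the generators so that $\qwa(F)$ is module-finite over an affine central subalgebra.

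I expect the main obstacle to be step (3): verifying the PBW basis so that $\gr_\cF A$ is precisely the asserted quantum affine space rather than a proper quotient, and checking carefully that the lower-order terms genuinely drop in the chosen filtration. A secondary subtlety is the PI claim in (5): one cannot simply lift PI from the associated graded, since $\gr_\cF A_D\tensor_D F$ being PI does not by itself force $\qwa(F)$ to be PI (the characteristic-zero Weyl algebra, whose associated graded is commutative, is the cautionary example), so the root-of-unity structure over the finite field $F$ must be used directly.
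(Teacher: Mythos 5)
Your proposal is correct and follows essentially the same route as the paper: the same filtration with $\deg(x_i)=\deg(y_i)=i$, the same choice of $D$, the identification of $\gr_{\cF}\qwa(D)$ as a quantum affine space (strongly noetherian as an iterated Ore extension via \cite{ASZ}), and the PI property over a finite factor field coming from the root-of-unity structure. For the last step the paper simply cites \cite[Theorem 3.11]{GZ}, which is precisely your ``central powers of the generators'' alternative; your other suggested route through \cite{LP} would not apply directly, since $\qwa$ is not an iterated Ore extension of derivation type.
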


\begin{proof}
(1) The algebra $\qwa(\kk)$ is locally finite filtered using the filtration $\cF$ defined above.
%That $\qwa(\kk)$ is noetherian follows from the filtration.

(2) Let $D$ be the (finite) extension of $\ZZ$ by the $q_{ij}$
and the $\gamma_i$.
It is clear that $D$ is an order of $\qwa(\kk)$, $\qwa(\kk)_D = \qwa(D)$, and $\cF$ induces a filtration on $\qwa(D)$.

(3),(4) The associated graded algebra $\gr_{\cF}(\qwa(D))$ is just a quantum affine space
over $D$ and this is an order for the corresponding quantum affine space over $\kk$.
Viewing $\gr_{\cF}(\qwa(D))$ as an Ore extension gives the strongly noetherian condition \cite[Proposition 4.1]{ASZ}.

(5) Let $F$ be a factor ring of $D$ that is a finite field,
then $\qwa(D) \tensor_D F = \qwa(D \tensor_D F)$.
One can see that $\qwa(D \tensor_D F)$ is still affine noetherian by using the filtration.
That $\qwa(D \tensor_D F)$ is PI follows from \cite[Theorem 3.11]{GZ}.
\end{proof}

\subsection*{Filtered AS regular algebras}
A connected $\NN$-graded algebra $A$ is said to be {\sf Artin-Schelter (AS) regular} if
it has finite GK dimension, finite global dimension $d$,
and satisfies $\Ext_A^i(\kk,A) \iso \delta_{id} \kk(\ell)$, where $\delta_{id}$ is the Kronecker-delta and $\kk(\ell)$ is the trivial module $A/A_{\geq 1}$ shifted by some integer $\ell$.
An algebra $A$ is {\sf filtered AS regular} if $\gr_{\cF}(A)$ is AS regular
where $\cF$ is the standard filtration defined by setting all generators in degree one.

By \cite[Theorem 4.1]{CKWZ1}, the Auslander map is an isomorphism for the pair $(A,H)$
where $H$ is a semisimple Hopf algebra and $A$ is a graded $H$-module algebra that is
also a global dimension 2 Artin-Schelter (AS) regular algebra.
Our next goal is to extend this theorem to all filtered AS regular algebras
of global dimension 2.

The AS regular algebras of global dimension 2 are the {\sf quantum planes}
\[ \qp = \kk\langle x,y : xy-qyx\rangle, \quad q \in \kk^\times,\]
and the {\sf Jordan plane},
\[ \jp = \kk\langle x,y : yx-xy + y^2\rangle.\]
The filtered AS regular algebras of global dimension 2 \cite{twogen} include $\qp$, $\jp$,
$U(\fg)$ where $\fg$ is the two-dimensional solvable Lie algebra,
the {\sf quantum Weyl algebras}
\[ \wa = \kk\langle x,y : xy-qyx-1\rangle, \quad q \in \kk^\times,\]
and the {\sf deformed Jordan plane}
\[ \jp_1 = \kk\langle x,y : yx-xy + y^2+1\rangle.\]
Note that $\wa$ is a special case of the quantized Weyl algebras defined above.

\begin{remark}
\label{rem.newfilt}
The algebras $\jp$, $U(\fg)$, and $\jp_1$ are all examples of iterated 
differential operator rings over $\kk[x]$.
It is not difficult to check that the proof of Theorem \ref{thm.iterated} 
goes through when one replaces the filtration in that proof with the standard filtration.
\end{remark}

%When $q$ is a root of unity, then $\wa$ is PI and thus the pertinency condition holds by \cite[Theorem 3.3]{BHZ2}.
%By showing that $\wa$ is congenial, we will verify the pertinency condition independently of the PI condition.

The notion of homological determinant was developed by J{\o}rgensen and Zhang, and we refer to \cite{gourmet} for the definition.
%Let $A$ be a filtered, noetherian, AS-Gorenstein ring, then by \cite[Lemma 2.1 and Proposition 2.4]{JZ} one may define the homological determinant of $g$ to be 
%\[\hdet_A(g) = \hdet_{\gr A} (g).\]
We denote the subgroup of homological determinant 1 automorphisms in $\Aut(A)$ by $\SL(A)$.

\begin{theorem}
\label{thm.filtAS}
Let $A$ be a filtered AS regular algebra of global dimension 2 and $G \subset \SL(A)$ a finite group of filtered automorphisms. Then $A\#G \iso \End_{A^G} A$.
\end{theorem}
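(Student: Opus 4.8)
The plan is to reduce the statement to a pertinency computation on the associated graded algebra, where the graded global dimension $2$ theorem of Chan--Kirkman--Walton--Zhang \cite[Theorem 4.1]{CKWZ1} is available. The congenial Auslander criterion \cite[Theorem 4.10]{BHZ2} tells us that, once $A$ is shown to be congenial and Cohen--Macaulay, the desired isomorphism $A\#G \iso \End_{A^G} A$ is equivalent to the single inequality $\p(A,G) \geq 2$. Moreover, since $G$ consists of filtered automorphisms, \cite[Proposition 3.6]{BHZ2} gives $\p(A,G) \geq \p(\gr_\cF A, G)$, so it suffices to prove $\p(\gr_\cF A, G) \geq 2$. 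Thus the theorem breaks into three tasks: congeniality, the Cohen--Macaulay property, and the graded pertinency bound.

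For congeniality I would argue case by case over the classification of filtered AS regular algebras of global dimension $2$ recalled above. The Jordan plane $\jp$, the enveloping algebra $U(\fg)$ of the two-dimensional solvable Lie algebra, and the deformed Jordan plane $\jp_1$ are iterated differential operator rings over $\kk[x]$, hence congenial with respect to the standard filtration by Theorem \ref{thm.iterated} together with Remark \ref{rem.newfilt}; the quantum Weyl algebra $\wa$ is a special case of the quantized Weyl algebras and so is congenial by Theorem \ref{thm.qweyl}. The remaining algebra, the quantum plane $\qp$, is already graded, so $\gr_\cF \qp = \qp$, and its congeniality follows from a direct verification parallel to the proof of Theorem \ref{thm.qweyl}: take $D=\ZZ[q,q\inv]$, observe that the associated graded of the resulting order is a quantum affine plane over $D$, strongly noetherian as an Ore extension by \cite[Proposition 4.1]{ASZ}, and that its reduction modulo any finite-field quotient of $D$ is affine noetherian and PI. For the Cohen--Macaulay property, $\gr_\cF A$ is a quantum plane or a Jordan plane and hence CM, and this lifts to the filtered algebra $A$ by \cite[Lemma 4.4]{SZ}, exactly as in Corollary \ref{cor.diff}.

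It then remains to bound $\p(\gr_\cF A,G)$. Since each $\phi\in G$ preserves the standard filtration, it induces a graded, and hence linear, automorphism $\gr_\cF\phi$ of $\gr_\cF A$, and $\gr_\cF A$ is a graded AS regular algebra of global dimension $2$, namely a quantum plane $\qp$ or a Jordan plane $\jp$. The hypothesis $G\subset\SL(A)$ passes to the associated graded, giving $\gr_\cF G\subset\SL(\gr_\cF A)$, which is exactly the condition under which \cite[Theorem 4.1]{CKWZ1} yields that the Auslander map is an isomorphism for the pair $(\gr_\cF A,\gr_\cF G)$. Because $\gr_\cF A$ is graded AS regular, CM, of GK dimension $2$, and $\gr_\cF G$ acts linearly, the equivalence ``Auslander map is an isomorphism $\iff \p\geq 2$'' of \cite{BHZ2,BHZ1} converts this into $\p(\gr_\cF A,G)\geq 2$. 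Combining with the previous paragraph, $\p(A,G)\geq\p(\gr_\cF A,G)\geq 2$, and \cite[Theorem 4.10]{BHZ2} finishes the proof.

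The step I expect to be the main obstacle is the transfer of the $\SL$ hypothesis across the associated graded construction: I must confirm that the homological determinant of a filtered automorphism of $A$ agrees with that of the induced graded automorphism of $\gr_\cF A$, so that $G\subset\SL(A)$ genuinely forces $\gr_\cF G\subset\SL(\gr_\cF A)$ and makes \cite[Theorem 4.1]{CKWZ1} applicable; in the global dimension $2$ setting this is precisely what guarantees that the induced $G$-action on $\gr_\cF A$ has no quasi-reflections, and it is where the hypothesis $G \subset \SL(A)$ is used. A secondary point needing care is the standalone congeniality of the quantum plane $\qp$, which is neither an iterated Ore extension of derivation type nor a quantized Weyl algebra and so requires the direct verification indicated above.
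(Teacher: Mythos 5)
Your proposal is correct and follows the same core strategy as the paper: establish congeniality and the Cohen--Macaulay property, reduce via $\p(A,G)\geq\p(\gr_\cF A,G)$ (\cite[Proposition 3.6]{BHZ2}) to a pertinency bound on the associated graded algebra, and close with \cite[Theorem 4.10]{BHZ2}. The differences are organizational rather than substantive. You run every case uniformly through the congenial machinery and invoke \cite[Theorem 4.1]{CKWZ1} for the graded pertinency bound; the paper instead dispatches the graded cases by citing \cite{A} for $\kk[x,y]$, \cite[Theorem 4.5]{MU} for $\jp$ and $\qp$ with $q\neq\pm1$, and \cite{CKWZ1} only for $q=-1$, and it handles $U(\fg)$ and the first Weyl algebra by direct citation of \cite[Theorem 0.4]{BHZ2} and \cite[Theorem 2.4]{Mo} rather than through the graded reduction. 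Your uniform route is arguably cleaner, but it forces you to verify congeniality of $\qp$ separately (your sketch of that verification is fine and parallels Theorem \ref{thm.qweyl}), and for $\wa$ you should record, as the paper does, that \cite[Proposition 1.5]{AD1} describes the automorphisms explicitly, so the finite group really does induce the expected (anti)diagonal graded action on $\qp$. On the point you flag as the main obstacle: in the filtered setting (as in \cite{CWWZ}) the homological determinant of a filtered automorphism is defined through the induced graded automorphism of $\gr_\cF A$, so $G\subset\SL(A)$ yields the corresponding containment for the induced graded action essentially by definition; the paper relies on the same convention implicitly, so this is not a gap in your argument.
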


\begin{proof}
First we consider graded algebras and graded group actions.
If $A=\kk[x,y]$, then this is Auslander's Theorem \cite{A}.
Now if $A=\jp$ or $\qp$ with $q \neq \pm 1$, 
then the result is true by \cite[Theorem 4.5]{MU}.
Note that this also proves the theorem for certain differential operator rings considered above.
The last case of $\qp$ with $q=-1$ follows from \cite[Theorem 4.1]{CKWZ1}.

Now we consider filtered actions.
The result for $U(\fg)$ is \cite[Theorem 0.4]{BHZ2} while $\jp_1$ is 
essentially Corollary \ref{cor.diff} but with $\gr\jp_1 = \jp$
(see Remark \ref{rem.newfilt}).
Finally, let $A=\wa$.
If $q=1$, then $A$ is the first Weyl algebra and this result follows from \cite[Theorem 2.4]{Mo}.
Otherwise, let $\cF$ be the standard filtration on $A$ so that $\gr_\cF(A) = \qp$. 
Then $A$ is congenial by Theorem \ref{thm.qweyl}.

As $\qp$ is CM \cite[Lemma]{LS}, then so is $A$ by \cite[Lemma 4.4]{SZ}.
It suffices to show that $\p(A,G)\geq 2$ by \cite[Theorem 4.10]{BHZ2}.
By \cite[Proposition 1.5]{AD1}, every automorphism of $A$ acts either
diagonally or anti-diagonally on the generators.
Thus, a finite subgroup $G \subset \Aut(A)$ restricts to 
a graded action on $\gr(A)=\qp$ and so
by \cite[Proposition 3.6]{BHZ2} and results above,
$\p(A,G) \geq \p(\gr(A),G) = 2$.
\end{proof}

\begin{comment}
We will prove that $A=\wa$ is congenial.
It is well-known that $A$ is noetherian. Set $D=\ZZ[q]$.
It is easy to verify that $A_D=A_1^q(D)$ is an order of $A$ since $A_D \tensor_D \kk = \wa$
and that $(\qp)_D$ is an order of $\qp$.
Moreover, $(\qp)_D$ is strongly noetherian by \cite[Proposition 4.1]{ASZ}.
Finally, if $F$ is a factor ring of $D$ that is a finite field,
then $A_D \tensor_D F = A_1^q(F)$.
Suppose $F$ has characteristic $p>0$.
A straightforward check shows that $x^p$ and $y^p$ are central in $A_1^q(F)$,
whence $A_1^q(F)$ is module finite over its center and therefore
affine noetherian PI over $F$.

Suppose $q \neq \pm 1$. Then $\Aut(A) \iso \kk^\times$ 
In particular, if $G \subset \Aut(A)$ is a finite group, then there exists $\lambda \in \kk^\times$ with $|\lambda|<\infty$ such that $G=\grp{\phi_\lambda}$ where $\phi_\lambda(x)=\lambda x$ and $\phi_\lambda(y)=\lambda\inv y$.
\end{comment}

\begin{comment}
Set $f = \sum_{k=0}^{m-1} 1\# (\phi_\lambda)^k \in A\#G$. Then let
\[ f_1 = xf-\lambda fx = \sum_{k=0}^{m-2} (1-\lambda^{k+1})x \# (\phi_\lambda)^k \in (f).\]
Inductively define $f_i = xf_{i-1}-\lambda^{i+1}f_{i-1}x \in (f)$ and we find that
\[ x^{m-1} \prod_{i=1}^{m-1}(1-\lambda^i) \in (f),\]
whence $x^{m-1} \in (f)$. A similar argument shows that $y^{m-1} \in (f)$.
\end{comment}

\begin{corollary}
\label{cor.filtAS}
Let $A$ be a filtered AS regular algebra of global dimension 2
and $H$ a finite-dimensional semisimple Hopf algebra acting on $A$ inner-faithfully
with trivial homological determinant.
If the $H$-action preserves the filtration of $A$, then $A\# H \iso \End_{A^H} A$.
\end{corollary}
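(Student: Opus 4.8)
The plan is to reduce Corollary \ref{cor.filtAS} to Theorem \ref{thm.filtAS} by transporting the semisimple Hopf-algebra action to an equivalent group action, exploiting the very restricted structure of automorphisms of filtered AS regular algebras of global dimension $2$. First I would observe that any $H$-module algebra structure on $A$ factors through the image of $H$ in $\End_\kk(A)$, and since the action preserves the filtration it induces an action on $\gr_\cF(A)$, which is one of the graded algebras $\kk[x,y]$, $\qp$, or $\jp$. The key structural input is that for each such $A$ the automorphism group is solvable and in fact quite small: by the results cited in the proof of Theorem \ref{thm.filtAS} (e.g. \cite[Proposition 1.5]{AD1}, \cite[Proposition 3.6]{AD2}), every filtered automorphism is triangular or (anti)diagonal on the generators. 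I expect that this forces the relevant automorphism groups to contain no finite nonabelian simple subgroups, so that any inner-faithful action of a semisimple Hopf algebra is in fact a group action in disguise.

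The central step is to invoke a rigidity theorem for semisimple Hopf actions on such algebras. When $A = \kk[x,y]$, $\qp$, or $\jp$, a finite-dimensional semisimple Hopf algebra acting inner-faithfully must be a group algebra $\kk G$; this is the content of \cite[Theorem 0.1]{CKWZ1} (the analogous collapse of Hopf actions to group actions for commutative polynomial rings and for these low-dimensional AS regular algebras). I would argue that the filtration-preserving hypothesis lets us pass this rigidity from $\gr_\cF(A)$ back to $A$: an inner-faithful filtered $H$-action on $A$ restricts to an inner-faithful graded $H$-action on $\gr_\cF(A)$, whence $H \iso \kk G$ for a finite group $G$, and the $G$-action lifts to a filtered group action on $A$ itself. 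The trivial homological determinant hypothesis on $H$ then translates to $G \subset \SL(A)$, since the homological determinant is multiplicative and is computed on the associated graded algebra.

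Once the Hopf action has been identified with a finite group $G \subset \SL(A)$ of filtered automorphisms, the conclusion $A\# H \iso A \# G \iso \End_{A^H} A = \End_{A^G} A$ is immediate from Theorem \ref{thm.filtAS}, since $A^H = A^G$ and the smash products coincide. The main obstacle I anticipate is precisely the rigidity step: showing that no genuinely non-group semisimple Hopf algebra can act inner-faithfully, preserving the filtration, on these algebras. This requires either a direct appeal to the classification in \cite{CKWZ1} in the graded cases plus a careful argument that inner-faithfulness and the Hopf structure are inherited by $\gr_\cF(A)$, or a separate case analysis for the genuinely filtered examples $U(\fg)$, $\wa$, and $\jp_1$, where one must verify that their automorphism-type constraints leave no room for a nontrivial Hopf (non-group) symmetry. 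I would handle the filtered cases by noting that each degenerates to a graded case on $\gr_\cF(A)$ and that the homological determinant and inner-faithfulness are both detected there.
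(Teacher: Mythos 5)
Your reduction hinges on a rigidity claim that is false in exactly the cases that matter: it is not true that every inner-faithful semisimple Hopf action on $\kk[x,y]$, $\qp$, or $\jp$ factors through a group algebra. The collapse of semisimple Hopf actions to group actions holds for commutative domains (Etingof--Walton) and, by \cite[Theorem 0.1]{CWWZ}, for filtered AS regular algebras that are \emph{not} PI; it fails precisely in the PI quantum cases. For instance, $\qp$ with $q=-1$ admits genuine inner-faithful actions of non-group semisimple Hopf algebras --- this is the whole point of the quantum binary polyhedral group classification in \cite{CKWZ1}. Your heuristic that a small or solvable automorphism group leaves ``no room'' for quantum symmetry is a non sequitur: a Hopf module-algebra structure is not determined by the automorphism group of $A$. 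Consequently your argument, which funnels every case through Theorem \ref{thm.filtAS} (a statement only about group actions), cannot handle the graded PI cases or the filtered PI case $\wa$ with $q$ a root of unity.

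The paper's proof splits differently. For $A$ genuinely graded (i.e.\ $\kk[x,y]$, $\qp$, $\jp$) it quotes \cite[Theorem 0.3]{CKWZ1}, which establishes the Auslander isomorphism for arbitrary semisimple Hopf actions with trivial homological determinant, with no reduction to groups. For $A$ filtered and non-PI it uses \cite[Theorem 0.1]{CWWZ} to factor the action through a group algebra and then applies Theorem \ref{thm.filtAS}; this is the only place where your group-reduction strategy is actually valid. The remaining case, $\wa$ with $q$ a root of unity, is PI, so neither route applies; there the paper uses congeniality together with pertinency, namely $\p(A,H) \geq \p(\qp,H) \geq 2$ via \cite[Proposition 3.6]{BHZ2} and the graded case already settled. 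To repair your proof you would need to restrict the group-reduction step to the non-PI algebras and supply independent arguments, as the paper does, for the PI ones.
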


\begin{proof}
If $A$ is an AS regular algebra of global dimension 2, this is just \cite[Theorem 0.3]{CKWZ1}.
If $A$ is a non-PI filtered AS regular algebra of global dimension 2,
then by \cite[Theorem 0.1]{CWWZ} the action of $H$ factors through a group algebra and we may apply Theorem \ref{thm.filtAS}.
We are left to consider only $\wa$ with $q$ a root of unity.
Again by \cite[Proposition 3.6]{BHZ2}, $\p(A,H) \geq \p(\qp,H)$
and $\p(\qp,H) \geq 2$ by the above.
\end{proof}

\subsection*{Down-up algebras}
Let $\alpha,\beta,\gamma \in \kk$. The {\sf down-up algebra} $A(\alpha,\beta,\gamma)$
is the $\kk$-algebra generated by $d,u$ subject to the relations
\begin{align*}
d^2u &= \alpha dud + \beta ud^2 + \gamma d, \\
du^2 &= \alpha udu + \beta u^2d + \gamma u.
\end{align*}
%The algebras were originally defined by Benkart and Roby \cite{BR}.
We consider here only the case that $\beta\neq 0$,
which is equivalent to assuming that the algebra is noetherian \cite[Theorem 1]{KMP}.
We set $r,s$ to be the two roots of $x^2-\alpha x-\beta=0$.

\begin{comment}
Let $D$ be a ring, $\sigma \in \Aut(D)$, and $a$ a central element in $D$.
The {\sf generalized Weyl algebra} $D(\sigma,a)$ is the ring
obtained by by adjoining to $D$ the variables $x$ and $y$ subject to the relations
\[ xy = \sigma(a), \quad yx = a, \quad xd=\sigma(d)x, \quad yd=\sigma\inv(d)y,\]
for all $d \in D$.
Generalized Weyl algebras introduced by Bavula \cite{Bav}.
\end{comment}

\begin{theorem}
\label{thm.du}
Suppose $A(\alpha,\beta,\gamma)$ is a noetherian down-up algebra 
%with $\beta \neq 0$.
such that $r$ and $s$ are roots of unity of order at least 2.
Then $A(\alpha,\beta,\gamma)$ is congenial.
\end{theorem}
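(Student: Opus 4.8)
The plan is to verify the five conditions of Definition~\ref{defn.congenial} directly, in the spirit of the proofs of Theorems~\ref{thm.super}, \ref{thm.iterated}, and~\ref{thm.qweyl}. Equip $A = A(\alpha,\beta,\gamma)$ with the standard filtration $\cF$ given by $\deg(d) = \deg(u) = 1$. Since each defining relation is cubic with only a lower-order correction term ($\gamma d$ resp.~$\gamma u$), the associated graded ring is the homogeneous down-up algebra $\gr_\cF A = A(\alpha,\beta,0)$, and $A$ is locally finite filtered, giving~(1). For the order, let $D$ be the $\kk_0$-affine subalgebra of $\kk$ generated by $\alpha$, $\beta$, $\gamma$ together with the roots $r,s$ and their inverses, and set $A_D = A_D(\alpha,\beta,\gamma)$. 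By the Benkart--Roby PBW theorem the monomials $\{d^i(ud)^ju^k : i,j,k \geq 0\}$ form a $D$-basis of the free $D$-module $A_D$, so $A_D \tensor_D \kk = A$; the same monomial basis identifies $\gr_\cF A_D = A_D(\alpha,\beta,0)$ as an order of $\gr_\cF A$, settling conditions~(1)--(3).

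The heart of the argument is condition~(4): that $\gr_\cF A_D = A_D(\alpha,\beta,0)$ is strongly noetherian over $D$. This is exactly where the hypothesis that $r$ and $s$ are roots of unity of order at least two is used. Because both roots are (nontrivial) roots of unity, the homogeneous down-up algebra $A(\alpha,\beta,0)$ is module-finite over its center and hence PI; since $r,s$ and their inverses already lie in $D$, a finite central generating set is defined over $D$, so $A_D(\alpha,\beta,0)$ is affine, PI, and noetherian as a $D$-algebra. Strong noetherianity then follows from \cite[Proposition 4.4]{ASZ}, just as in the proof of Theorem~\ref{thm.super}. I expect this to be the main obstacle: one must either exhibit the central subalgebra explicitly or invoke the known PI criterion for down-up algebras, and in either case check that the central elements witnessing module-finiteness have coefficients in $D$ rather than merely in $\kk$. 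This is why the roots-of-unity assumption is indispensable rather than cosmetic---without it $A(\alpha,\beta,0)$ fails to be PI and the \cite{ASZ} route to strong noetherianity is unavailable.

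Finally, for condition~(5), let $F$ be a factor ring of $D$ that is a finite field. Then $A_D \tensor_D F = A_F(\alpha,\beta,\gamma)$ is affine over $F$ by construction, and since $\beta = -rs$ is a product of roots of unity its image in $F$ is nonzero, so this reduction is a noetherian down-up algebra by \cite[Theorem 1]{KMP}. The PI property comes for free from the finiteness of $F$: the images of $r$ and $s$ lie in a finite extension of $F$, in which every nonzero element is a root of unity, so the same roots-of-unity criterion applies and $A_F(\alpha,\beta,\gamma)$ is PI over $F$. Assembling~(1)--(5) shows that $A(\alpha,\beta,\gamma)$ is congenial.
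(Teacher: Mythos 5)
Your steps (1)--(3) match the paper's. The problem is step (4), and it is a genuine gap rather than a stylistic difference. You claim that because $r$ and $s$ are roots of unity of order at least two, the graded algebra $A(\alpha,\beta,0)$ is module-finite over its center in characteristic zero. This is false when $r=s$. Writing $A(\alpha,\beta,0)$ as a generalized Weyl algebra over $\kk[du,ud]$, conjugation by $d$ acts on the span of $du$ and $ud$ by the companion matrix of $x^2-\alpha x-\beta$; since a companion matrix is diagonalizable only when its characteristic polynomial has distinct roots, for $r=s$ (e.g.\ $\alpha=-2$, $\beta=-1$, $r=s=-1$, which the theorem's hypotheses allow) this matrix is a nontrivial Jordan block of infinite multiplicative order in characteristic zero, no power of $d$ is central, and $A(-2,-1,0)$ is not finite over its center (nor PI) --- so the route through \cite[Proposition 4.4]{ASZ} collapses. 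Even in the case $r\neq s$, where the PI claim does hold, you assert it without proof or citation. The paper avoids all of this: its step (4) uses no roots-of-unity hypothesis at all, instead observing that $\gr_\cF A_D\tensor_D C$ is a graded down-up algebra over any commutative noetherian $C$ and is noetherian via the generalized Weyl algebra construction (\cite[Proposition 7]{Bav} together with \cite[Corollary 2.2]{KMP}).

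Relatedly, you have misplaced where the roots-of-unity hypothesis does the work. The paper states explicitly that it is used only in step (5), and there your argument is also too thin: you transfer a characteristic-zero ``roots of unity implies PI'' criterion (which, as above, is not even correct as stated) to the reduction mod $p$ by noting that every nonzero element of a finite field is a root of unity. What is actually needed is a characteristic-$p$-specific statement, namely Hildebrand's theorems \cite[Theorems 4.2 and 4.4]{Hild} that under these hypotheses some power of $d$ and of $u$ is central in $A_D\tensor_D F$, so the reduction is module-finite over its center and hence PI. Your observation that $\beta=-rs$ is a unit in $D$, so the reduction mod $p$ is still a noetherian down-up algebra, is correct and worth keeping; but steps (4) and (5) need to be reworked along the lines above.
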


\begin{proof}
(1) The down-up algebra $A=A(\alpha,\beta,\gamma)$
is locally finite filtered with the standard
filtration $\cF$ defined by setting $\deg(u,v)=1$.

(2) Set $D=\ZZ[\alpha,\beta,\gamma,r,s]$.
Then $A_D$ is again a down-up algebra and the filtration $\cF$ induces a filtration on $A_D$.
%Moreover, $A_D$ is noetherian, again by \cite{Bav,KMP}.

(3) It is clear that $\gr_{\cF} A \iso A(\alpha,\beta,0)$,
whence $\gr_{\cF} A$ is again a down-up algebra.
Thus, $\gr_{\cF} A_D$ is an order of $\gr_{\cF} A$.

(4) Let $C$ be a commutative noetherian $D$-algebra.
Then $\gr_{\cF} A_D \tensor_D C$ is a (graded) down-up algebra over $C$, whence noetherian by \cite[Proposition 7]{Bav} and \cite[Corollary 2.2]{KMP}.
It is worth noting that \cite{KMP} only states the result for base ring a field,
but by appealing to the construction of a down-up algebras as a 
generalized Weyl algebra, it is sufficient by \cite[Proposition 7]{Bav} that the base ring $D$ is noetherian.
Hence, $\gr_{\cF} A_D$ is strongly noetherian.

(5) Let $F$ be a factor ring of $D$ that is a finite field.
%and let $n=\ch F$.
Then $A_D \tensor_D F$ is still a down-up algebra (over F) and hence it is affine noetherian. 
By \cite[Theorems 4.2 and 4.4]{Hild}, some power of $d$ and $u$ are central and so $A_D \tensor_D F$
is module-finite over its center, whence PI.
\end{proof}

There are several other cases of down-up algebras, depending on the roots $r$ and $s$, where Theorem \ref{thm.du} applies.
This hypothesis was only used in (5) and so one needs only verify
that $A_D \tensor_D F$ is PI.
For instance, this still holds when $r\neq 1$ and $s=1$.
The interested reader is referred to \cite{Hild}.

\begin{corollary}
\label{cor.du}
Suppose $A=A(\alpha,\beta,\gamma)$ is a down-up algebra with 
$\beta \neq 0,-1$ or $(\alpha,\beta)=(\pm 2,-1)$, and $r,s$ roots of unity of order at least 2.
If $G$ is a finite group of filtered automorphisms of $A$ such that 
$\p(\gr_{\cF}A,G)\geq 2$, then $A\#G \iso \End_{A^G} A$.
\end{corollary}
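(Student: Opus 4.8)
The plan is to follow the template of Corollaries \ref{cor.diff} and \ref{cor.super}: use congeniality to pass to the associated graded algebra, verify the Cohen--Macaulay property, and then reduce the Auslander map isomorphism to a pertinency bound that is already supplied on $\gr_\cF A$. First I would observe that the parameter hypotheses all force $\beta \neq 0$ (if $\beta \neq 0,-1$ this is immediate, while $(\alpha,\beta)=(\pm 2,-1)$ has $\beta=-1\neq 0$), so $A$ is noetherian; together with the assumption that $r,s$ are roots of unity of order at least two, Theorem \ref{thm.du} gives that $A$ is congenial with respect to the standard filtration $\cF$.

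Next I would establish that $A$ is Cohen--Macaulay. Step (3) of the proof of Theorem \ref{thm.du} identifies $\gr_\cF A \iso A(\alpha,\beta,0)$, a \emph{graded} down-up algebra, and this is exactly where the parameter restrictions $\beta \neq 0,-1$ or $(\alpha,\beta)=(\pm 2,-1)$ do their work: they single out the graded down-up algebras that are Cohen--Macaulay (a citation to the structure theory of graded down-up algebras is needed here). Granting that $\gr_\cF A$ is CM, \cite[Lemma 4.4]{SZ} lifts the property through the filtration, so that $A$ itself is CM.

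Finally, with congeniality and the CM property in hand, \cite[Theorem 4.10]{BHZ2} reduces the claim $A\#G \iso \End_{A^G} A$ to the single inequality $\p(A,G)\geq 2$. Since every element of $G$ is a filtered automorphism, it induces a graded automorphism of $\gr_\cF A$, so the pertinency comparison of \cite[Proposition 3.6]{BHZ2} applies and yields $\p(A,G)\geq \p(\gr_\cF A,G)$. The hypothesis $\p(\gr_\cF A,G)\geq 2$ then closes the argument. I expect the only real obstacle to be the middle step, namely pinning down that $A(\alpha,\beta,0)$ is Cohen--Macaulay precisely for the listed parameters; the remaining steps are formal consequences of the congeniality framework developed above.
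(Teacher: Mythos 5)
Your overall skeleton matches the paper's: congeniality from Theorem \ref{thm.du}, the Cohen--Macaulay property, the reduction to $\p(A,G)\geq 2$ via \cite[Theorem 4.10]{BHZ2}, and a pertinency comparison between $A$ and $\gr_\cF A$. The differences are in the middle step and in what the parameter hypothesis is for. The paper gets the CM property of \emph{both} $A$ and $\gr_\cF A$ in one stroke from \cite[Lemma 4.2]{KMP}, which applies to every noetherian down-up algebra (i.e., whenever $\beta\neq 0$); no parameter restriction beyond $\beta\neq 0$ is needed there. Your proposed route --- prove $\gr_\cF A\iso A(\alpha,\beta,0)$ is CM and lift through the filtration by \cite[Lemma 4.4]{SZ} --- would also work, but your guess that the hypothesis ``$\beta\neq 0,-1$ or $(\alpha,\beta)=(\pm 2,-1)$'' is what singles out the CM graded down-up algebras is not correct, and the citation you admit you are missing is exactly \cite[Lemma 4.2]{KMP}.

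The consequence of that misattribution is more serious than a wrong reference: in your argument the parameter restriction ends up doing no work at all, which should have been a warning sign. The paper routes the final step through \cite[Theorem 4.3]{GKMW} together with \cite[Theorems 0.6, 4.10]{BHZ2}, rather than through the bare inequality $\p(A,G)\geq\p(\gr_\cF A,G)$ of \cite[Proposition 3.6]{BHZ2} that you invoke (and that the paper itself uses in Corollary \ref{cor.diff} and Theorem \ref{thm.filtAS}). The point of the extra input is the identification of the filtered group $G$ with a group of graded automorphisms of $\gr_\cF A$: to make the hypothesis $\p(\gr_\cF A,G)\geq 2$ meaningful and to run the comparison, one needs control over how filtered automorphisms of $A(\alpha,\beta,\gamma)$ induce graded automorphisms of $A(\alpha,\beta,0)$, and that is where the case analysis on $(\alpha,\beta)$ (equivalently on the roots $r,s$) enters via \cite{GKMW}. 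So your proof is not wrong in outline, but it silently discharges the one hypothesis that is specific to this corollary; you should either justify why the induced $G$-action on $\gr_\cF A$ is the one for which the pertinency bound is assumed, or cite \cite[Theorem 4.3]{GKMW} as the paper does.
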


\begin{proof}
By \cite[Lemma 4.2]{KMP}, $A$ and $\gr_{\cF}A$ are CM.
We may now apply Theorem \ref{thm.du}
%\cite[Theorem 4.10]{BHZ2} along with 
\cite[Theorems 0.6, 4.10]{BHZ2} and \cite[Theorem 4.3]{GKMW}.
\end{proof}

\subsection*{Symplectic reflection algebras}
%By \cite[Proposition 4.8]{BHZ2}, $R\#H$ is congenial whenever $R$ is congenial and $H$ is a semisimple Hopf algebra acting on $R$ with $H$-action preserving the filtration.
%We study an analog of this for a family of deformations of the skew group ring $\kk[V]\#G$.
%Our source for background material in this section is \cite{B}.
Let $V$ be a finite-dimensional vector space, 
$\omega$ a non-degenerate, skew symmetric bilinear form on $V$, 
$\Sp(V)$ the symplectic linear group on $V$,
%\{g \in \GL(V) : \omega(gu,gv) = \omega(u,v), \text{for all } u,v \in V\}$ 
and $G \subset \Sp(V)$ a finite group that is generated by $\cS$, the set of all symplectic reflections in G.
%The triple $(V,\omega,G)$ is said to be a {\sf symplectic reflection group}.
Given this data, along with $t \in \kk$ and a conjugate invariant function 
$\bc :\cS \rightarrow \kk$, i.e. $\bc(gsg\inv)=\bc(s)$ for all $s \in \cS$ and $g \in G$,
the {\sf symplectic reflection algebra} $\bH_{t,\bc}$ is defined as
\[
\bH_{t,\bc} = T(V^*) \# G / \left( u \tensor v - v \tensor u = t\omega(u,v) - \sum_{s \in \cS} \bc(s) \omega_s(u,v) : u,v \in V^* \right) 
\]
where $\omega_s$ is the 2-form on $V$ whose restriction to $\im(1-s)$ is $\omega$
and whose restriction to $\Ker(1-s)$ is zero.

%Let $\{v_1,\hdots,v_n\}$ be a basis for $V^*$.
%Equivalently, this is a basis for $\bH_{t,\bc}$ as an algebra over $\kk G$.
%By the defining relation, it follows that \[ \delta_{v_i}(v_j) = \sum_{p=1}^n \alpha_p(i,j) v_p\] for some $\alpha_p(i,j) \in \kk G$.

\begin{theorem}
\label{thm.symplectic}
The symplectic reflection algebra $\bH_{t,\bc}$ is congenial.
\end{theorem}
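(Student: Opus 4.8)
The plan is to verify the five conditions of Definition~\ref{defn.congenial} directly, following the same template used in the proofs of Theorem~\ref{thm.super}, Theorem~\ref{thm.iterated}, and Theorem~\ref{thm.qweyl}. The key structural fact I would exploit is the PBW property for symplectic reflection algebras (Etingof--Ginzburg): as a filtered algebra with the standard filtration placing $V^*$ in degree one and $G$ in degree zero, $\bH_{t,\bc}$ has associated graded ring $\gr_\cF \bH_{t,\bc} \iso \kk[V] \# G = \operatorname{Sym}(V^*)\#G$, independent of $t$ and $\bc$. This is the analogue of the ``$\gr$ is a (skew) polynomial ring'' step that makes the other examples work, and it will drive conditions (3) and (4).

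Let me describe the five steps. For (1), the standard filtration is clearly locally finite since $V^*$ is finite-dimensional and $G$ is finite. For (2), I would let $D$ be the $\kk_0$-affine subalgebra of $\kk$ generated by $t$, the values $\bc(s)$, the entries of the matrices representing $G$ in a fixed basis of $V^*$, and the structure constants of $\omega$; then $(\bH_{t,\bc})_D$ is the symplectic reflection algebra defined over $D$ with the same presentation, and $(\bH_{t,\bc})_D \tensor_D \kk = \bH_{t,\bc}$. Freeness over $D$ follows from the PBW basis $\{\text{monomials in a basis of }V^*\} \cdot G$. For (3), since $\gr_\cF (\bH_{t,\bc})_D = \kk[V]_D \# G = \operatorname{Sym}_D(V^*)\#G$ and this base-changes correctly to $\gr_\cF \bH_{t,\bc}$ over $\kk$, it is an order. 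For (4), $\operatorname{Sym}_D(V^*)\#G$ is a finite module over the commutative strongly noetherian polynomial ring $\operatorname{Sym}_D(V^*)$, so it is itself strongly noetherian by \cite[Proposition~4.4]{ASZ} (finite extensions preserve the strongly noetherian property). For (5), if $F$ is a finite field that is a factor ring of $D$, then $(\bH_{t,\bc})_D \tensor_D F$ is the symplectic reflection algebra over $F$; it is affine and noetherian via its PBW filtration, and it is PI because $\operatorname{Sym}_F(V^*)$ has a large central subalgebra in positive characteristic (the $p$-th powers of a basis), so the whole algebra is module-finite over its center and hence PI.

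The step I expect to require the most care is (5), specifically establishing the PI condition in positive characteristic. The subtlety is that, unlike the enveloping-algebra case where Lemma~\ref{lem.super1} gives PI cleanly, here one must control both the deformed commutation relations and the crossed product with $G$ simultaneously. The cleanest route is to observe that over a finite field $F$ of characteristic $p$ the associated graded $\operatorname{Sym}_F(V^*)\#G$ is module-finite over the central polynomial subalgebra generated by $p$-th powers (using that $G$ is finite and permutes a central generating set up to the finite group algebra), and then lift this through the filtration to conclude that $(\bH_{t,\bc})_D\tensor_D F$ is module-finite over a commutative affine subring, hence affine noetherian PI. One should double-check that the center survives the deformation; a safe alternative is to invoke that a filtered algebra whose associated graded is module-finite over a central noetherian subring is itself PI.

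\begin{proof}
(1) With the standard filtration $\cF$ placing $V^* $ in degree one and the elements of $G$ in degree zero, $\bH_{t,\bc}$ is locally finite filtered since $\dim_\kk V^* < \infty$ and $|G| < \infty$.

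(2) Let $D$ be the $\kk_0$-affine subalgebra of $\kk$ generated by $t$, the values $\bc(s)$ for $s \in \cS$, the matrix entries of the elements of $G$ in a fixed basis of $V^*$, and the structure constants of $\omega$. Let $(\bH_{t,\bc})_D = \bH_{t,\bc}(D)$ be the symplectic reflection algebra defined by the same presentation over $D$. By the PBW theorem for symplectic reflection algebras, $(\bH_{t,\bc})_D$ is free over $D$ with basis the products of monomials in a basis of $V^*$ with elements of $G$, and $(\bH_{t,\bc})_D \tensor_D \kk = \bH_{t,\bc}$. The filtration $\cF$ descends to $(\bH_{t,\bc})_D$, which is therefore locally finite filtered over $D$.

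(3) By PBW, $\gr_\cF (\bH_{t,\bc})_D = \operatorname{Sym}_D(V^*) \# G$, and this base-changes to $\gr_\cF \bH_{t,\bc} = \operatorname{Sym}_\kk(V^*)\#G$. Hence $\gr_\cF (\bH_{t,\bc})_D$ is an order of $\gr_\cF \bH_{t,\bc}$.

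(4) The algebra $\operatorname{Sym}_D(V^*)\#G$ is a finite module over the commutative polynomial ring $\operatorname{Sym}_D(V^*)$, which is strongly noetherian over $D$ by \cite[Proposition 4.1]{ASZ}. A finite extension of a strongly noetherian algebra is strongly noetherian by \cite[Proposition 4.4]{ASZ}, so $\gr_\cF (\bH_{t,\bc})_D$ is a strongly noetherian locally finite graded algebra over $D$.

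(5) Let $F$ be a factor ring of $D$ that is a finite field of characteristic $p$. Then $(\bH_{t,\bc})_D \tensor_D F = \bH_{t,\bc}(F)$, which is affine and, via the PBW filtration, noetherian. The subalgebra of $\operatorname{Sym}_F(V^*)$ generated by the $p$-th powers of a basis of $V^*$ is central, and $\operatorname{Sym}_F(V^*)\#G$ is module-finite over it. Lifting through the filtration, $\bH_{t,\bc}(F)$ is module-finite over a commutative affine central subalgebra, whence it is affine noetherian PI over $F$.
\end{proof}
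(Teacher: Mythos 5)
Your steps (1)--(4) are fine and essentially parallel the paper's proof (the paper handles (4) by citing \cite[Proposition 4.8]{BHZ2} for congeniality of $\kk[V]\#G$ rather than your finite-module argument, but both routes work, and your choice of $D$ in (2) is if anything more careful than the paper's). The genuine gap is in step (5), and it occurs at exactly the point you flagged as delicate. First, the subalgebra of $\operatorname{Sym}_F(V^*)$ generated by the $p$-th powers of a basis of $V^*$ is \emph{not} central in $\operatorname{Sym}_F(V^*)\#G$: for $g \in G$ one has $g\, v^p = g(v)^p g \neq v^p g$ in general, since $G$ need not fix $v^p$. (The correct central subalgebra to use would be the invariant ring $\operatorname{Sym}_F(V^*)^G$, over which the skew group algebra is indeed module-finite.) Second, and more seriously, the phrase ``lifting through the filtration'' hides an unproved --- and in general false --- implication. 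Central elements of $\gr_\cF A$ need not lift to central elements of $A$, and an algebra whose associated graded ring is module-finite over a central noetherian subring need not be PI: the first Weyl algebra $\fwa$ in characteristic zero has associated graded ring $\kk[x,y]$, which is module-finite over itself (a central noetherian subring), yet $\fwa$ is not PI. So your proposed ``safe alternative'' is not a theorem.

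The statement that the symplectic reflection algebra over a field of characteristic $p$ is module-finite over its center is a genuine result that requires proof; the paper obtains it by citing the appendix of Bezrukavnikov--Finkelberg--Ginzburg \cite{BFGE}, which shows $H_D \tensor \kk_p$ is module-finite over its center for $\kk_p$ an algebraic closure of $F$, and then passes to the subring $H_D \tensor_D F$. To repair your argument you would either need to cite that result, or actually exhibit central elements of $\bH_{t,\bc}(F)$ itself (not merely of its associated graded ring) over which the algebra is finite --- which is precisely the content of the cited theorem.
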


\begin{proof}
Set $H=\bH_{t,\bc}$.

(1) There is a natural filtration $\cF$ on $H$ defined by setting $V^*$ in degree one
and $G$ in degree zero. The PBW theorem for symplectic reflection algebras 
\cite[Theorem 1.3]{EG} gives $\gr_{\cF} H \iso \kk[V]\# G$.
It follows from this that $H$ is a locally finite filtered algebra.
%See \cite[Theorem 1.3.1 and Corollary 1.3.2]{B}.

(2) Set $D$ to be the polynomial ring over $\ZZ$ generated by $t$ and $\bc(s)$ for all $s \in S$.
Then $D$ is an affine subalgebra of $\kk$ and it is clear that $H_D$ is an order of $H$.
The filtration defined above descends and so $H_D$ is locally finite filtered.

(3),(4) We have that $\gr_{\cF} H_D = (\gr_{\cF} H)_D = (\kk[V]\# G)_D$.
But $\kk[V]$ is congenial with $D$ and so $\kk[V]\# G$ is congenial by \cite[Proposition 4.8]{BHZ2}.
The filtration above is just the grading on $\kk[V]\# G$ with $V$ in degree 1 and $G$ in degree zero.

(5) Let $F$ be a finite field factor ring of $D$ and set $\ch F = p$.
Then $H_D \tensor F$ remains affine noetherian essentially by the PBW theorem.
By \cite[Appendix]{BFGE}, $H_D \tensor \kk_p$ is module-finite over its center whence PI, where $\kk_p$ is the algebraically closed field containing $F$.
It follows that the subring $H_D \tensor F$ is also PI.
\end{proof}

%\jason{As usual, (5) is a problem. The result cited is not correct because it requires the field to be algebraically closed and we want a finite field.}
%\textcolor{red}{Attempt: Let $\mathbb{C}_p$ be the algebraically closed field of characteristic $p$ containing the finite field $F$. We know that $H_D\otimes_D\mathbb{C}_p$ is PI by the reference, and since $H_D\otimes_DF$ is a subring of $H_D\otimes_D\mathbb{C}_p$, then it should also be PI by inheritance.}
%\jason{Maybe. The reference requires that hypothesis to prove that the algebra is module-finite over its center. Perhaps the PI condition applies more generally.}

\begin{corollary}
Let $H=\bH_{t,\bc}$ be a symplectic reflection algebra and $K$ a semisimple Hopf 
algebra acting on $H$ preserving the filtration given above.
Then $H\#K \iso \End_{H^K} H$ if and only if $\p(H,K) \geq 2$.
\end{corollary}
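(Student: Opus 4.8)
The plan is to reduce the statement to the iff criterion of Bao, He, and Zhang for congenial algebras, so that almost all of the required work has already been carried out in Theorem \ref{thm.symplectic}. That theorem shows $H$ is congenial, and the hypotheses imposed on $K$ (semisimple and preserving the filtration $\cF$) are precisely those demanded by the congenial framework recalled in the background. The only remaining hypothesis to verify is that $H$ is Cohen-Macaulay of sufficiently large GK dimension.

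To establish the CM property I would pass to the associated graded ring. By the PBW theorem for symplectic reflection algebras invoked in the proof of Theorem \ref{thm.symplectic}, $\gr_\cF H \iso \kk[V]\#G$. Since $\kk[V]$ is a polynomial ring it is CM, and $\kk[V]\#G$ is free of rank $|G|$ as a module over $\kk[V]$, so $\kk[V]\#G$ inherits the CM property and has $\GKdim = \dim_\kk V \geq 2$ as $V$ is a nonzero symplectic space. Then \cite[Lemma 4.4]{SZ}, which lifts the CM property from $\gr_\cF H$ to $H$ and which is applied in exactly this fashion in Corollary \ref{cor.diff}, yields that $H$ is CM.

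With $H$ congenial and CM and $K$ a semisimple Hopf algebra preserving the filtration, the conclusion follows at once from the semisimple Hopf-algebra version of \cite[Theorem 4.10]{BHZ2}: the Auslander map $H\#K \to \End_{H^K} H$ is an isomorphism if and only if $\p(H,K) \geq 2$. Since this is an equivalence, both directions of the corollary are delivered by the single cited theorem, and no separate pertinency computation is needed. I expect the only genuine obstacle to be confirming that the BHZ2 equivalence is available in full semisimple Hopf-algebraic generality rather than merely for finite group actions; this is, however, precisely the setting in which their pertinency invariant was introduced, so it should apply verbatim. The forward implication (isomorphism implies $\p(H,K)\geq 2$) holds quite generally, whereas the congenial and CM hypotheses are exactly what power the reverse implication.
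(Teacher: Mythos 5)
Your proposal is correct and follows the same route as the paper: the paper's proof simply says one may apply \cite[Theorem 4.10]{BHZ2} once $H$ is known to be CM, which ``follows from the filtration on $H$'' --- exactly the point you spell out via $\gr_\cF H \iso \kk[V]\#G$ and \cite[Lemma 4.4]{SZ}. You have merely supplied the details the paper leaves implicit.
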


\begin{proof}
We can directly apply \cite[Theorem 4.10]{BHZ2} so long as $H$ is CM,
but this follows from the filtration on $H$.
%See, e.g., \cite[Theorem 4.4]{brown}.
\end{proof}

\subsection*{Acknowledgments}
The authors thank James Zhang for several helpful conversations.
They also thank the anonymous referee for a careful reading and for pointing out a simplification of Definition \ref{defn.congenial}.

The first author was partially supported by a grant from the 
Miami University Senate Committee on Faculty Research.

%\bibliography{biblio}{}
\bibliographystyle{amsplain}

\providecommand{\bysame}{\leavevmode\hbox to3em{\hrulefill}\thinspace}
\providecommand{\MR}{\relax\ifhmode\unskip\space\fi MR }
% \MRhref is called by the amsart/book/proc definition of \MR.
\providecommand{\MRhref}[2]{%
  \href{http://www.ams.org/mathscinet-getitem?mr=#1}{#2}
}
\providecommand{\href}[2]{#2}

\end{document}